\definecolor{labelkey}{gray}{.8}
\definecolor{refkey}{gray}{.8}
\definecolor{darkred}{rgb}{0.9,0.1,0.1}
\definecolor{darkgreen}{rgb}{0,0.5,0}
\newcommand{\tnorm}[1]{{\left\vert\kern-0.25ex\left\vert\kern-0.25ex\left\vert #1 
    \right\vert\kern-0.25ex\right\vert\kern-0.25ex\right\vert}}
\newtheorem{theorem}{Theorem}[section]
\newtheorem{lemma}[theorem]{Lemma}
\newtheorem{proposition}[theorem]{Proposition}
\theoremstyle{remark}
\newtheorem{remark}[theorem]{Remark}
\renewenvironment{proof}[1][Proof]{ {\itshape \noindent {#1.}} }{$\Box$
\medskip}
\numberwithin{equation}{section}
\newcommand{\R}{\mathbb{R}}
\newcommand{\Pb}{\mathbb{P}}
\newcommand{\E}{\mathbb{E}}
\newcommand{\D}{\mathcal{D}}
\newcommand{\eps}{\varepsilon}
\newcommand{\ep}{\epsilon}
\newcommand{\VVar}{\mathrm{\VVar}}
\newcommand{\la}{\langle}
\newcommand{\ra}{\rangle}
\newcommand{\EE}{\mathbf{E}}
\renewcommand{\H}{\mathcal{H}}
\def\blue{\textcolor{black}}
\newcommand{\x}{\mathbf{x}}
\newcommand{\y}{\mathbf{y}}
\newcommand{\cZ}{\mathcal{Z}}
\newcommand{\cD}{\mathcal{D}}
\newcommand{\bfX}{\mathbf{X}}
\newcommand{\bfU}{\mathbf{U}}
\newcommand{\pp}{\mathbf{p}}
\newcommand{\LL}{\mathbf{h}_0}
\newcommand{\VV}{\mathbf{R}}
\newcommand{\rhopoly}{\rho_{t,x}^{\scriptscriptstyle\rm poly}}
\newcommand{\Epoly}{\E^{\scriptscriptstyle\rm poly}}
\newcommand{\Egibbs}{\E^{\scriptscriptstyle\rm Gibbs}}
\newcommand{\Einit}{\E^{\scriptscriptstyle\rm init}}
\newcommand{\Ppoly}{{\mathbb P}^{\scriptscriptstyle\rm poly}}
\begin{document}
\title{Integration by parts and invariant measure for KPZ}

\author{Yu Gu and Jeremy Quastel}


\address[Yu Gu]{Department of Mathematics, University of Maryland, College Park, MD 20742, USA. }
\email{yugull05@gmail.com}

\address[Jeremy Quastel]{Department of Mathematics, University of Toronto, 40 St. George St, Toronto, ON M5S 2E4, Canada}
\email{quastel@math.toronto.edu}

\maketitle

\begin{abstract} 
Using Stein's method and a Gaussian integration by parts, we provide a direct proof of the known fact that drifted Brownian motions are invariant measures (modulo height) for the KPZ equation.
\medskip

\noindent \textsc{Keywords:} KPZ, directed polymer, delta Bose gas, Stein's method
\end{abstract}
\maketitle


\section{Background}

 The  Kardar-Parisi-Zhang (KPZ) equation is,
\begin{equation}\label{KPZ}
\partial_t h = \tfrac12(\partial_x h)^2 +\tfrac12\partial_x^2 h + \xi,
\end{equation}
where $\xi$ denotes space-time white noise, the distribution valued Gaussian field with correlation function
 \begin{equation}
 \langle \xi(t,x),\xi(s,y)\rangle=\delta(t-s)\delta(x-y).
 \end{equation}
It is an equation for a randomly evolving height function $h\in \mathbb{R}$ which depends on position $x\in \mathbb{R}$ and time $t\in \mathbb{R}_+$. The derivative 
\begin{equation}\label{you}
    u=\partial_x h
\end{equation}
solves the (essentially equivalent) stochastic Burgers equation
\begin{equation}
    \label{SBE}
    \partial_t u = \tfrac12\partial_x u^2 +\tfrac12\partial_x^2 u + \partial_x\xi.
\end{equation}
Although very sophisticated solutions theories for \eqref{KPZ} and \eqref{SBE} have become available in recent years \cite{HairerKPZ,Goncalves2014,Gubinelli2017}, they all identify the solution $h$ as the \emph{Cole-Hopf solution} \begin{equation}\label{aitch} h=\log Z,\end{equation}
where $Z$ is the solution of the stochastic heat equation  
\begin{equation}\label{she}
\partial_t Z=\tfrac12\partial_x^2 Z+Z\xi.
\end{equation}
The stochastic heat equation is one of the few stochastic partial differential equations which is well-posed by an elementary extension of the It\^o theory  \cite{walsh1986introduction}.

One of the amazing facts about KPZ \eqref{KPZ} is the (almost) invariance of Brownian motion.  More precisely, it is invariant modulo height shifts, or, equivalently, its derivative,  white noise, is invariant for SBE \eqref{SBE}.  This invariant spatial white noise  is distinct from the forcing space-time white noise $\xi$, and one should think of it as independent of $\xi$ and living on an orthogonal probability space.  

If one drops the non-linearity in \eqref{KPZ}/\eqref{SBE}, it gives the so-called Edwards-Wilkinson equation, which is an infinite dimensional Ornstein–Uhlenbeck process. The same Brownian motion/white noise is easily seen to be invariant under the linear dynamics.  The fact that the non-linear part of the dynamics also preserves the Brownian motion is not at all obvious, and unfortunately we still do not have a clear simple proof of this fact. In general we are completely lacking methods to compute or even check invariant measures for non-linear stochastic partial differential equations. Part of the problem is the difficulty of constructing domains for the generator $L$ of the semigroup, so that the equation for invariant measures  $\int Lf d\mu=0$ can somehow be checked for those $f$ living in the domain.  In the KPZ case, invariance has previously always been proven by approximating the process by discrete models, or mollifications, for which the invariance of appropriate versions of Brownian motion are known.  And all these arguments involve a properly interpreted telescoping series.    We survey them now.

\subsection{Formal argument \cite{Parisi,Spohn}}  Since the Edwards-Wilkinson equation  $\partial_t u= \tfrac12\partial_x^2 u + \partial_x\xi$  preserves white noise, it is (formally) enough to prove  the invariance for the 
Burgers flow\footnote{The essential obstacle here that the Burgers flow  is ill-defined.
 If one interprets it via entropy solutions,  then one has  the Lax-Oleinik variational formula for the solution, $u=\partial_x h$ with
\begin{equation}
h(t,x) = \sup_{y\in \mathbb{R}} \left\{ -\frac{(x-y)^2}{2t} +h(0,y)\right\}.
\end{equation}
Starting from $h(0,x)$ a two-sided Brownian motion, one obtains a collection of  `N-waves', i.e.  the indefinite space integral of a bunch of Dirac masses of various sizes, minus a linear function.   The statistics are
 known exactly \cite{MR1781884} (see also \cite{MR1833701} for more general classes of solvable initial data).  At any rate, the result starting with a Brownian motion is definitely {\it not} a new Brownian motion, though the formal argument tells you it should be.  Note also that the formal argument works just as well for the ill-defined flows $\partial_tu= \tfrac12\partial_x(u^m)$.} $\partial_tu= \tfrac12\partial_x(u^2)$.  For a nice function $f$  on the state space, we hope to show that under the Burgers flow  $\Einit[\EE[f(u(t))| u_0]]= \Einit[f(u_0)]$ where $\Einit$ refers to the expectation over the white noise initial data, and $\EE[\cdot | u_0]$ over the process starting from $u_0$.  We write it formally as 
\begin{equation}\label{aaa}
\partial_t \int du~ P_t f(u) e^{-\tfrac{\sigma^2}{2}\int u^2}=0.
\end{equation} 
The (ill-defined) integral is over the state space with (mythical) flat measure $du$.  The measure $e^{-\tfrac{\sigma^2}{2}\int u^2}du$ indicates  white noise with 
$
E[ u(x) u(y)] = \sigma^{-2} \delta(x-y)
$.  $P_t$ denotes the semigroup $P_tf(u)=\EE[f(u(t))\mid u(0)=u]$. The formal generator of the process is 
\begin{equation}
    \mathcal{L}f = \int dx~ \tfrac12\partial_x(u^2) \frac{\delta f}{\delta u_x},
\end{equation}
where $ \frac{\delta f}{\delta u}$ is the functional (Frechet) derivative.  Differentiating the left hand side of \eqref{aaa} gives
\begin{equation}
  \int du \int dx  \frac{\delta f}{\delta u_x}  \tfrac12\partial_x(u_x^2)
e^{-\tfrac{\sigma^2}{2}\int u^2}.   
\end{equation}
Integrating by parts gives,
\begin{equation}
      - \tfrac12 \int du~f\int dx   \frac{\delta }{\delta u_x} \left( 
\partial_x(u^2 )
e^{-\tfrac{\sigma^2}{2}\int u^2}\right).  
\end{equation}
Now $\frac{\delta }{\delta u_x} \left( 
\partial_x(u^2 )
e^{-\tfrac{\sigma^2}{2}\int u^2}\right) = \frac{\delta }{\delta u_x} \left(
\partial_x(u^2 )\right)
e^{-\tfrac{\sigma^2}{2}\int u^2}+ 
\partial_x(u^2 )\frac{\delta }{\delta u_x} 
e^{-\tfrac{\sigma^2}{2}\int u^2}$ and $\frac{\delta }{\delta u_x} 
\partial_x(u^2 )= \frac{\delta }{\delta u_x} 2u 
\partial_x u = 2\partial_xu$ and 
\begin{equation}\label{1.11}
\partial_x (u^2)   \tfrac{\delta }{\delta u_x} 
e^{-\tfrac{\sigma^2}{2}\int u^2}  =  -\sigma^2 u   
\partial_x (u^2) e^{-\tfrac{\sigma^2}{2}\int u^2} = -\sigma^2\partial_x\tfrac23(u^3)
e^{-\tfrac{\sigma^2}{2}\int u^2} .
\end{equation}
Crucially $
    u\partial_x(u^2)  = \partial_x\tfrac23u^3
$ is an exact derivative and hence if $f$ only depends on $u_x$, $x\in [-L,L]$,
\begin{equation}
    \int du f \int dx \partial_x( 2u -\sigma^2 \tfrac23u^3)e^{-\tfrac{\sigma^2}{2}\int u^2} =0. 
\end{equation} 
Note that the Burgers part of the flow formally preserves white noise with {\it any} variance parameter $\sigma^2$.  The constraint $\sigma=1$ is  set by the Edwards-Wilkinson part.

\subsection{Smoothed out version  \cite{MR3350451} } For the carefully smoothed out stochastic Burgers equation 
\begin{equation}
  \label{SoSBE}
    \partial_t u = \tfrac12\partial_x (u^2\ast R) +\tfrac12\partial_x^2 u + \partial_x\eta,   
\end{equation}
where $\eta$ is Gaussian with $\langle\eta(t,x),\eta(s,y)\rangle = \delta(t-s)R(x-y)$  the previous argument becomes rigorous.  Again it is not hard to see that white noise with covariance $R$ is invariant for the linear part
$\partial_t u = \tfrac12\partial_x^2 u + \partial_x\eta$.  For the nonlinear part the generator is 
$\tfrac12\int dx \partial_x (u^2\ast R)(x) \frac{\delta}{\delta u_x} $.  The analogue of \eqref{1.11} (with $\sigma=1$) is
\begin{equation}
   \partial_x (u^2\ast R)(x)  \frac{\delta}{\delta u_x}  e^{-\tfrac12 \langle u, R^{-1} u\rangle} =
  - \partial_x (u^2\ast R)(x) (u\ast R^{-1})(x) e^{-\tfrac12 \langle u, R^{-1} u\rangle},
\end{equation}
and integration with respect to $x$ leads to integration of the exact derivative $\tfrac23 \partial_x u^3$, giving $0$.    Taking $R$ as an approximate identity one can show that the corresponding $Z$ defined by 
$u=\partial_x\log Z$ does indeed converge to the stochastic heat equation \eqref{she} although the proof is quite involved (see  \cite{MR3350451} for details.)  Note that examples of $R$ include the projection $P_N$ to finitely many Fourier modes for the equation on the circle, i.e. on the circle $\partial_t P_N u = P_N \tfrac12\partial_x u^2 $ preserves independent Gaussian Fourier modes.  This fact is used extensively in many proofs, e.g.
\cite{majda2000remarkable,MR2540076}.

\subsection{Discrete version}  The formal argument also holds rigorously with the following special discretization which was used in \cite{MR2570756}, though it goes back at least as far as \cite{PhysRevLett.15.240}:
\begin{equation}
d\phi_x = [\phi_{x+1}-2\phi_x+\phi_{x-1}
+ \phi_{x+1}\phi_x -\phi_x\phi_{x-1}
+ \phi_{x+1}^2-\phi_{x-1}^2 ] dt + dB_{x+1}-dB_x.
\end{equation}
Here $x\in \mathbb{Z}$ and $B_x$ are independent Brownian motions.
The generator $L=S+A$ where 
$S=\sum_x \partial^*_x \partial_x$ is symmetric with respect to $d\mu=\exp\{{- {1\over2} \sum_y \phi_y^2}\}dy$ with the notation $
\partial_x=\frac{\partial}{\partial \phi_x}$
and \begin{equation}
A=
\sum_x( \phi_{x+1}\phi_x -\phi_x\phi_{x-1}
+ \phi_{x+1}^2-\phi_{x-1}^2) {\partial\over\partial\phi_{x}}.
\end{equation}
As before it is clear that
$\int Sf d\mu=0$.  Integrating by parts, 
\begin{align*}
&\int( \phi_{x+1}\phi_x -\phi_x\phi_{x-1}
+ \phi_{x+1}^2-\phi_{x-1}^2) {\partial\over\partial\phi_{x}}f e^{- {1\over2} \sum_y \phi_y^2} \prod d\phi_y
\\&
=-\int f [( \phi_{x+1} -\phi_{x-1}) -\phi_x( \phi_{x+1}\phi_x -\phi_x\phi_{x-1}
+ \phi_{x+1}^2-\phi_{x-1}^2)  ] e^{- {1\over2} \sum_y \phi_y^2} \prod d\phi_y.
\end{align*} The special form of the nonlinearity leads to a telescoping sum over $x$,  since \begin{equation} \phi_x( \phi_{x+1}\phi_x -\phi_x\phi_{x-1}
+ \phi_{x+1}^2-\phi_{x-1}^2)=
\phi_x^2 \phi_{x+1} +\phi_x \phi_{x+1}^2-  \phi_{x-1}^2 \phi_{x} -\phi_{x-1} \phi_{x}^2 \end{equation} is a discrete gradient.  For any local $f$ (depends on finitely many sites) this shows that
$
\int Af d\mu=0$.  In \cite{Gubinelli2017,MR3774431} it is shown that under diffusive scaling the model on $\frac1{N}\mathbb{Z}$ converges to the KPZ equation.

\subsection{WASEP \cite{MR1462228}} Bertini and Giacomin gave the first rigorous proof of Brownian invariance for KPZ 
 employing an approximation by weakly asymmetric simple exclusion process (WASEP).  In the asymmetric simple exclusion process (ASEP) particles on $\mathbb{Z}$ jump in continuous time, to the right at rate $p$ and to the left at rate $q$, with jumps to occupied sites suppressed (exclusion).
The generator has a core of local functions which it acts on as
\begin{equation}\label{asepgenerator}
L f (\eta) = \sum_x \left\{ p\eta(x)(1-\eta(x+1)) +q(1-\eta(x))\eta(x+1)\right\} ( f(\eta^{x,x+1})-f(\eta)),
\end{equation}
where 
$\eta\in \{0,1\}^\mathbb{Z}$ represents the configuration of particles  and $\eta^{x,x+1}$ is obtained from $\eta$ by switching the occupation variables at $x$ and $x+1$.  
For any $\rho\in [0,1]$ the Bernoulli product measure $\pi_\rho$ on $\{0,1\}^\mathbb{Z}$ with $\pi_\rho(\eta(x)=1)=\rho$ and $\pi_\rho(\eta(x)=0)=1-\rho$ is invariant. To see this, let $f$ depend only on $\eta(x)$, $|x|\le N$.   Making the change of variables $\eta\mapsto\eta^{x,x+1}$ 
\begin{equation}
\int \eta(x)(1-\eta(x+1)) f(\eta^{x,x+1})d\pi_\rho (\eta)  = \int \eta(x+1)(1-\eta(x)) f(\eta) d\pi_\rho (\eta),
\end{equation}
since $\frac{d\pi_\rho(\eta^{x,x+1})}{d\pi_\rho(\eta)} =1$.  Hence
\begin{equation}
\int Lfd\pi_\rho = \int (p-q)\sum_x ( \eta(x+1)(1-\eta(x)) - \eta(x)(1-\eta(x+1))) f(\eta)d\pi(\eta).
\end{equation}
Again the summation is telescoping and leads to 
$
\int g fd\pi_\rho 
$
where $g$ does not depend on the variables $\eta(x)$, $|x|\le N$ and is mean $0$.  Hence $\int Lf d\pi_\rho=0$.

The height function $h$ is defined to go up or down by one at integer points depending on whether or not there is a particle there.  We observe it under the 1:2:4 scaling $h_\eps = \eps^{1/2} h(\eps^{-2} t, \eps^{-1} x)$ with weak asymmetry $p=\tfrac12(1-\eps^{1/2})$, $q=\tfrac12(1+\eps^{1/2})$, in   which case  
\begin{equation} dh_\ep = [ \ep^{-3/2}  ({\bf{1}}_\vee - {\bf{1}}_\wedge) -\ep^{-1} ({\bf{1}}_\vee+{\bf{1}}_\wedge)]  dt + d M,\end{equation} where 
$M$ is a martingale and ${\bf{1}}_\vee$ means that we are  at a local min of $h$, ${\bf{1}}_\vee$  that we are  at a local max.  It is not completely transparent why this is a discretization of the KPZ equation.  But on the lattice $\ep \mathbb Z$,
\begin{equation}
{\bf{1}}_\vee - {\bf{1}}_\wedge  =\tfrac{ \ep^{3/2}}{2} \nabla^- \nabla^+ h\,,\qquad 
{\bf{1}}_\vee + {\bf{1}}_\wedge  =-\tfrac{ \ep}{2} \nabla^-h \nabla^+ h + \tfrac12\,,
\end{equation}
where $\nabla^- h(x) = \ep^{-1} (h(x)- h(x-\ep)) $, $\nabla^+ h(x) = \ep^{-1} (h(x+\ep)- h(x)) $, and the martingales are approximating white noises, one can see that formally WASEP converges to the KPZ equation
\eqref{KPZ} modulo the  large drift $\eps^{-1}t/2$. 
It is proved in \cite{MR1462228} by using the fact that the exponential of the height function satisfies a nice discretization of \eqref{she}.

 $\pi_\rho$ are actually the extremals of the set of translation invariant probability measures invariant for ASEP.  But there are other invariant measures which are not translation invariant, e.g. the {\em blocking measures} which are product measures with $\mu(\eta(x)=1) = \frac{(p/q)^x}{1+ (p/q)^x}$.  These turn out not have non-trivial limits in the weakly asymmetric limit:  It has recently been shown \cite{janjigian,Dunlap-Sorensen-2024}, that Brownian motion with drifts are the only invariant measures for the KPZ equation on the line.  That the Brownian bridge is the only invariant measure on the circle was proved earlier \cite{Hairer-Mattingly-2018}.

 \subsection{Generator} \cite{MR4168394} employ martingale problems to construct  the stochastic Burgers equation as a Markov process for initial data absolutely continuous with respect to the invariant measure $\mu$, in this case spatial white noise.  They are able to construct a rich enough domain for the generator $L$ that in a certain sense $L^*\mu=0$ is identifying $\mu$ as invariant.  Their method in fact goes much farther and shows exponential $L^2$-ergodicity.  On the other hand, the domain is so tailor made to the equation that it does not seem possible to use it to prove convergence theorems.  Most relevant to the our discussion is that $L^*\mu=0$ is not proven directly, but, as in all other articles, inherited from special discrete models.

 \subsection{KdV} Invariance of white noise for the KdV equation 
\begin{equation}
    \partial_t u =\tfrac12 \partial_x u^2 + \lambda\partial_x^3 u
\end{equation} is formally related since the invariance by the linear part of the flow $\partial_t u = \lambda\partial_x^3 u$ is clear, \blue{by  the rotational invariance of the complex Gaussian.}
It was first done on the circle using integrable methods \cite{MR2365449}, then with non-integrable methods \cite{MR2540076}, then extended to the line using clever integrable arguments \cite{MR4145790}.

\blue{\subsection{Integration by parts method}The new proof presented in this paper is based on an integration by parts identity which comes from  the polymer representation. The identity itself comes from a mollification procedure, but unlike most earlier proofs of invariance, the mollification here does not have to be chosen very precisely.  The integration by parts formula is  mostly a standard Gaussian integration by parts, but there is a hidden cancellation -- an unusual It\^o formula -- which can be thought of as the mechanism behind the white noise conservation. The  crux of the argument lies in proving \eqref{e.cancellation}, which is the hidden symmetry and, in our telling of the story, is what keeps white noise invariant. The integration by parts formula itself has other applications. For example, in a forthcoming article we use it to study polymer coalescence.}

\blue{We will present the result for the equation posed on $\R$. The periodic setting can be treated similarly, with some simplifications. }


\blue{\subsection*{Organization of the paper.} The rest of the paper is organized as follows. In Section~\ref{s.result}, we present the main results: two integration by parts formulas, one with respect to the Gaussian noise and the other with respect to the Gaussian initial data. By combining these formulas with Stein’s equation, we establish the invariance of white noise under \eqref{SBE}. The subsequent sections are dedicated to proofs, with Section~\ref{s.cancellation} focusing on the crucial cancellation in \eqref{e.cancellation}.}

\section{Main results} \label{s.result}

\blue{Let $Z(t,x;y)$ denote the solution to \eqref{she} starting from $Z(t=0,x;y)=\delta_y(x)$.  By linearity, the solution with initial data $Z_0(x)$ is given by $\int_\R Z(t,x;y)Z_0(y)dy$.  Letting $Z_0(y)= e^{h_0(y)}$, the \emph{Cole-Hopf solution of KPZ} \eqref{KPZ} starting with initial height function $h_0$ is $h=\log Z$.  As long as $h_0$ grows at most linearly at $\pm \infty$ the integral makes sense and gives the unique solution to \eqref{she} with initial data $e^{h_0}$ (see, for example, \cite{MR1462228}).  This defines a stochastic process $u=\partial_x h $  in the space of distributions, with initial data $u_0=\partial_x h_0$, the \emph{stochastic Burgers flow}. More precisely, for any smooth $f$ with compact support on $\R$ we have a real stochastic process $\langle u_t,f\rangle$. We say $u$ is \emph{a white noise} if $\langle u,f\rangle$ are jointly Gaussian with $E[ \langle u,f\rangle^2]= \|f\|_{L^2(\R)}^2$.
\begin{theorem}  The stochastic Burgers flow preserves white noise:
    If $u_0$ is a white noise, then $u_t$ is a white noise.
\end{theorem}
}

The density (in $y$) at time $t$ of the polymer starting at $x$  with reward $e^{h_0}$ is
\begin{equation}\label{e.qdensity}
\rho_{t,x}^{\scriptscriptstyle\rm poly}(y)=\frac{Z(t,x;y)e^{h_0(y)}}{\int_\R Z(t,x;\tilde{y})e^{h_0(\tilde{y})}d\tilde{y}}.
\end{equation}

Our main tool for proving invariance is the following formula.

\begin{proposition}[Integration by parts for KPZ]\label{p.yt12}Let $\xi$ be space-time white noise,  and $u_t(x)$ the Cole-Hopf solution of \eqref{SBE} with   smooth initial data $u_0$ satisfying $|\int_0^x u_0(y)dy|\leq \alpha+\beta |x|$ for some $\alpha,\beta>0$.  Then for any smooth $f$ with compact support on $\R$ and $F\in C^1(\R)$,
\begin{equation}
\label{e.yt1} \begin{aligned} &\EE F(\langle f,u_t\rangle) \langle f,u_t\rangle= \|f\|_{L^2(\R)}^2\EE F'(\langle f,u_t\rangle) + \Gamma\end{aligned}\end{equation}
where
\begin{equation}
\label{last} \begin{aligned}
& \Gamma= \tfrac12\int_{\R^4}dx_1dx_2dy_1dy_2 f(x_1)f'(x_2){\rm sgn}(y_2-y_1) \EE F'(\langle f,u_t\rangle)\rho_{t,x_1}^{\scriptscriptstyle\rm poly}(y_1)\rho_{t,x_2}^{\scriptscriptstyle\rm poly}(y_2)\\
&\quad\quad+  \EE F(\langle f,u_t\rangle) \int_{\R^2} dxdy ~u_0(y)\rho^{\scriptscriptstyle\rm poly}_{t,x}(y) f(x).  \end{aligned}
\end{equation}
\end{proposition}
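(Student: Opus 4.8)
The plan is to read \eqref{e.yt1} as a Stein-type identity and to prove it by a Gaussian integration by parts with respect to the driving space-time white noise $\xi$, using the polymer representation to identify the relevant Malliavin derivatives. First I would regularize, replacing $\xi$ by a spatially mollified noise $\xi_\kappa$ (covariance $R_\kappa\to\delta$) so that $Z$, $u_t$ and all the densities become smooth functionals of a genuine Gaussian field and every manipulation below is classical; the limit $\kappa\to0$ is taken only at the very end. As the text already notes, the mollifier need not be chosen carefully, which makes this last limit the least delicate part of the argument.

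The engine of the computation is the identity
\[
D_{s,z}\log Z(t,x)=q^{(s)}_{t,x}(z):=\frac{Z(s,z)\,Z(t,x;s,z)}{Z(t,x)},\qquad 0\le s\le t,
\]
where $D_{s,z}$ is the Malliavin derivative in the direction $\xi(s,z)$, and $q^{(s)}_{t,x}$ is the time-$s$ marginal of the polymer associated with $\rho_{t,x}^{\scriptscriptstyle\rm poly}$. It is a probability density in $z$ with $q^{(0)}_{t,x}=\rho_{t,x}^{\scriptscriptstyle\rm poly}$ and $q^{(t)}_{t,x}=\delta_x$. This follows from the multiplicative (Feynman--Kac) structure of \eqref{she}: perturbing the noise at $(s,z)$ inserts a factor $Z(s,z)$ which is then propagated to $(t,x)$ by $Z(t,x;s,z)$. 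Differentiating in $x$ and pairing with $f$ gives the clean formula $D_{s,z}\la f,u_t\ra=-\int_\R f'(x)\,q^{(s)}_{t,x}(z)\,dx$, which at $s=0$ reproduces $\rho_{t,x}^{\scriptscriptstyle\rm poly}$.

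Next I would use the stochastic Burgers equation \eqref{SBE} to write $\la f,u_t\ra=\la f,u_0\ra+\int_0^t(\tfrac12\la f'',u_s\ra-\tfrac12\la f',u_s^2\ra)\,ds-\int_0^t\la f',\xi(s,\cdot)\ra\,ds$. The forcing term is linear in $\xi$ with the \emph{deterministic} kernel $-f'$, so the divergence/IBP duality applies with no anticipating correction, giving $\EE[F(\la f,u_t\ra)\,(-\int_0^t\la f',\xi(s,\cdot)\ra\,ds)]=\EE[F'(\la f,u_t\ra)\int_0^t\!\!\iint f'(x)f'(z)\,q^{(s)}_{t,x}(z)\,dx\,dz\,ds]$. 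The remaining Burgers-drift contribution is then reorganized using the $s$-evolution of the densities $q^{(s)}$, the quadratic nonlinearity being what produces the two-point product $\rho_{t,x_1}^{\scriptscriptstyle\rm poly}(y_1)\rho_{t,x_2}^{\scriptscriptstyle\rm poly}(y_2)$ in $\Gamma$. The whole point is that forcing and drift assemble into a perfect $s$-derivative: setting
\[
\mathcal S(s):=\tfrac12\int_{\R^4} f(x_1)f'(x_2)\,\mathrm{sgn}(z_2-z_1)\,\EE\!\left[F'(\la f,u_t\ra)\,q^{(s)}_{t,x_1}(z_1)q^{(s)}_{t,x_2}(z_2)\right]dx_1dx_2dz_1dz_2,
\]
they equal $\int_0^t(-\partial_s\mathcal S)\,ds=\mathcal S(0)-\mathcal S(t)$. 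Here $\mathcal S(0)$ is exactly the first term of $\Gamma$ (since $q^{(0)}_{t,x}=\rho_{t,x}^{\scriptscriptstyle\rm poly}$), while the coincidence limit $q^{(t)}_{t,x_i}=\delta_{x_i}$ collapses $\mathcal S(t)$ to $\tfrac12\iint f(x_1)f'(x_2)\,\mathrm{sgn}(x_2-x_1)\,\EE F'=-\|f\|_{L^2}^2\,\EE F'$, which supplies the main Gaussian term; the leftover $\la f,u_0\ra$ piece is transported by the polymer into the initial-data term $\EE F\int u_0(y)\rho_{t,x}^{\scriptscriptstyle\rm poly}(y)f(x)$.

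The crux, and the step I expect to be the \textbf{main obstacle}, is justifying that forcing and drift really collapse to $\partial_s\mathcal S$. This rests on an \emph{unusual It\^o formula}: as a function of the intermediate time $s$, the density $q^{(s)}_{t,x}$ solves a \emph{noise-free} transport (continuity) equation, because the forward factor $Z(s,z)$ and the backward propagator $Z(t,x;s,z)$ are driven by the \emph{same} white noise, which cancels in their product up to an It\^o correction. Making this cancellation rigorous, tracking the forward versus backward stochastic integrals, the quadratic-variation term, and especially the short-time coincidence limit $s\uparrow t$ where the regularization is felt, is the heart of the argument; once it is in place, the $s$-integration is a clean fundamental-theorem-of-calculus step. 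The rest is routine: the linear-growth hypothesis on $\int_0^x u_0$ guarantees $Z(t,x)<\infty$ and the convergence of all polymer densities and $x$-integrals, $F\in C^1$ and $f\in C_c^\infty$ provide the integrability needed for the integration by parts, and the limit $\kappa\to0$ then yields \eqref{e.yt1}.
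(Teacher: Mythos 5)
Your setup (mollify the noise, identify $D_{s,z}\log Z(t,x)$ with the polymer midpoint density, and integrate by parts in the Gaussian space of $\xi$) is sound and is indeed how the paper begins; the formula $D_{s,z}\la f,u_t\ra=-\int f'(x)q^{(s)}_{t,x}(z)\,dx$ matches the paper's $\cD_{s,y}Z=Z\,\Epoly_{t,x}\phi(X_{t-s}-y)$. But your central claim --- that the Burgers drift and the forcing-term IBP assemble into the exact telescope $\int_0^t(-\partial_s\mathcal{S})\,ds=\mathcal{S}(0)-\mathcal{S}(t)$ --- is not merely unproven (you yourself flag it as the main obstacle); as stated it is false, and the failure is visible in your own bookkeeping of the initial data. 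Your decomposition of $Y=\la f,u_t\ra$ via the weak-in-time form of \eqref{SBE} leaves the term $\EE F(Y)\la f,u_0\ra$, whereas $\Gamma$ in \eqref{last} contains $\EE F(Y)\int_{\R^2} u_0(y)\rho_{t,x}^{\scriptscriptstyle\rm poly}(y)f(x)\,dx\,dy$, i.e.\ the initial data smeared by the \emph{random} polymer endpoint density; these are different random variables, and ``transported by the polymer'' names no mechanism converting one into the other. Consequently, writing $\Gamma_1$ for the ${\rm sgn}$ term of $\Gamma$, if drift $+$ forcing were exactly $\mathcal{S}(0)-\mathcal{S}(t)=\Gamma_1+\|f\|_{L^2}^2\,\EE F'$, the proposition would force $\EE F(Y)\la f,u_0\ra$ to equal the second term of $\Gamma$, which is false. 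Concretely, take $F\equiv 1$, so $F'\equiv 0$ and $\mathcal{S}(s)\equiv 0$: your telescope then asserts $\EE\la f,u_t\ra=\la f,u_0\ra$ for all $t$, i.e.\ conservation of the mean, but from the weak form $\partial_t\la f,\EE u_t\ra=\tfrac12\la f'',\EE u_t\ra-\tfrac12\la f',\EE u_t^2\ra\neq 0$ in general, while the proposition correctly gives $\EE\la f,u_t\ra=\EE\int_{\R^2} u_0(y)\rho_{t,x}^{\scriptscriptstyle\rm poly}(y)f(x)\,dx\,dy$. So drift $+$ forcing must differ from $\mathcal{S}(0)-\mathcal{S}(t)$ by precisely the initial-data transport term your argument sweeps under the rug, and the proof collapses at its declared crux.

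The paper avoids this trap by never invoking the weak-in-time formulation: it splits $Y$ through the Feynman--Kac representation \eqref{e.fkueps}, so the initial-data piece of $\EE F(Y)Y$ is \emph{verbatim} the second term of $\Gamma$ and needs no processing, and the Malliavin IBP is applied only to the polymer noise term $\Epoly_{t,x}\int_0^t\partial_x\eta(t-s,X_s)\,ds$ (Prop.~\ref{p.yt120}). The resulting two-replica time integral $\int_0^t R'(y_2-y_1)\rho_{t,x_1}^{\scriptscriptstyle\rm poly}(s,y_1)\rho_{t,x_2}^{\scriptscriptstyle\rm poly}(s,y_2)$ is then converted, not by a Fokker--Planck identity for midpoint densities, but by an ordinary It\^o formula for $r(X^2_s-X^1_s)$ under the polymer measure, producing the ${\rm sgn}$ term, the $\|f\|_{L^2}^2$ term, and a leftover stochastic integral $\Epoly_{t,x_1,x_2}\int_0^t R_\eps(X^2_s-X^1_s)\,d(X^2_s-X^1_s)$. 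The genuinely hard content --- absent from your proposal in any quantitative form --- is Prop.~\ref{p22} (all of Section~4): the bound $Ce^{C(|x_1|+|x_2|)}\eps|\log\eps|^{1/2}$ showing this stochastic integral vanishes in the limit. Without either that estimate or a corrected telescope that explicitly accounts for the initial-data transport, your argument does not prove \eqref{e.yt1}.
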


Here, and elsewhere, $\EE$ refers to expectation over the random background $\xi$.  The initial data $u_0$ is fixed here, either non-random, or the formula holding for each realization of it.  

Suppose that $u_0$ is a stationary Gaussian process with a smooth covariance \begin{equation}\label{aa}
    {\rm Cov}(u_0(x),u_0(y))= (\psi\ast\psi) (x-y).
\end{equation} 
In other words, $u_0$ is white noise convolved with a smooth symmetric test function $\psi$. 
Define $a(x)$ by
\begin{equation}\label{ab}
    a'(x) = (\psi\ast\psi) (x),\qquad a(0)=0.
\end{equation}
 Then we can take the expectation $\Einit$ over the initial data and perform another Gaussian integration by parts on the second term on the right hand side of  \eqref{last}.  The result is

\begin{proposition}\label{Prop13}  Let $u_0$ be the stationary Gaussian process with covariance \eqref{aa} and $a(\cdot)$ is defined in \eqref{ab}, then 
\begin{equation*}  \begin{aligned}
& \qquad\qquad \Einit F(\langle f,u_t\rangle) \int_{\R^2} dxdy ~u_0(y)\rho^{\scriptscriptstyle\rm poly}_{t,x}(y) f(x)\\
& \quad = -\int_{\R^4}dx_1dx_2dy_1dy_2 f(x_1)f'(x_2)a(y_2-y_1) \Einit F'(\langle f,u_t\rangle)\rho_{t,x_1}^{\scriptscriptstyle\rm poly}(y_1)\rho_{t,x_2}^{\scriptscriptstyle\rm poly}(y_2).  
\end{aligned}
\end{equation*}
    \end{proposition}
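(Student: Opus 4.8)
The plan is to write the second term of \eqref{last} as $\int_{\R^2}\d x\,\d y\, f(x)\,\Einit\big[u_0(y)\,H_{x,y}\big]$ with $H_{x,y}=F(\la f,u_t\ra)\,\rho_{t,x}^{\scriptscriptstyle\rm poly}(y)$, and to apply the Gaussian integration by parts formula for the smooth field $u_0$,
\begin{equation*}
\Einit\big[u_0(y)\,H_{x,y}\big]=\int_\R \d z\,(\psi\ast\psi)(y-z)\,\Einit\Big[\tfrac{\delta H_{x,y}}{\delta u_0(z)}\Big],
\end{equation*}
where $(\psi\ast\psi)(y-z)=a'(y-z)$ is the covariance \eqref{aa}. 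Differentiating the product $H_{x,y}$ splits the answer into an $F'$-term (from the dependence of $u_t$ on the initial data) and an $F$-term (from the dependence of $\rho_{t,x}^{\scriptscriptstyle\rm poly}$ on $u_0$), and the proof amounts to showing that the $F$-term vanishes while the $F'$-term collapses to the stated kernel.

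First I would record the elementary functional derivatives. Since $h_0(w)=\int_0^w u_0$, one has $\delta h_0(w)/\delta u_0(z)=\1_{[0,w]}(z)$, the signed indicator of $z$ lying between $0$ and $w$ (equivalently $\tfrac12(\mathrm{sgn}(w-z)+\mathrm{sgn}(z))$). Because the initial data enters the propagator representation $Z_t(x)=\int_\R Z(t,x;\tilde y)e^{h_0(\tilde y)}\,\d\tilde y$ only through the reward $e^{h_0}$, this gives
\begin{equation*}
\frac{\delta \log Z_t(x)}{\delta u_0(z)}=\int_\R \rho_{t,x}^{\scriptscriptstyle\rm poly}(\tilde y)\,\1_{[0,\tilde y]}(z)\,\d\tilde y, \qquad
\frac{\delta \rho_{t,x}^{\scriptscriptstyle\rm poly}(y)}{\delta u_0(z)}=\rho_{t,x}^{\scriptscriptstyle\rm poly}(y)\Big(\1_{[0,y]}(z)-\int_\R \rho_{t,x}^{\scriptscriptstyle\rm poly}(\tilde y)\,\1_{[0,\tilde y]}(z)\,\d\tilde y\Big).
\end{equation*}
Writing $\la f,u_t\ra=-\int_\R f'(x)\log Z_t(x)\,\d x$ (integrating by parts, $f$ being compactly supported) then yields $\delta\la f,u_t\ra/\delta u_0(z)=-\int_\R f'(x)\int_\R\rho_{t,x}^{\scriptscriptstyle\rm poly}(\tilde y)\1_{[0,\tilde y]}(z)\,\d\tilde y\,\d x$, which is exactly what manufactures the \emph{second} polymer density in the statement. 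The only $z$-integral needed is $\int_\R a'(y-z)\1_{[0,w]}(z)\,\d z=a(y)-a(y-w)$.

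The proof is then finished by two cancellations. For the $F$-term both polymer densities sit at the same space point $x$, and after the $z$-integration the surviving kernel is $\rho_{t,x}^{\scriptscriptstyle\rm poly}(y)\int_\R \rho_{t,x}^{\scriptscriptstyle\rm poly}(\tilde y)\,a(y-\tilde y)\,\d\tilde y$; since $\psi$ is symmetric, $a'=\psi\ast\psi$ is even and hence $a$ is odd, so integrating the symmetric product $\rho_{t,x}^{\scriptscriptstyle\rm poly}(y)\rho_{t,x}^{\scriptscriptstyle\rm poly}(\tilde y)$ against the antisymmetric $a(y-\tilde y)$ returns $0$. For the $F'$-term the $z$-integral produces $a(y_1)-a(y_1-y_2)$; the $a(y_1)$ piece multiplies $f'(x_2)$ by the total mass $\int_\R\rho_{t,x_2}^{\scriptscriptstyle\rm poly}(y_2)\,\d y_2=1$, so its $x_2$-integral reduces to $\int_\R f'(x_2)\,\d x_2=0$. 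What remains, after using that $a$ is odd, is the coefficient $-a(y_2-y_1)$ in front of $\int f(x_1)f'(x_2)\,\Einit\big[F'\rho_{t,x_1}^{\scriptscriptstyle\rm poly}(y_1)\rho_{t,x_2}^{\scriptscriptstyle\rm poly}(y_2)\big]$, which is precisely the right-hand side of Proposition~\ref{Prop13}.

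The main obstacle is analytic rather than algebraic: justifying the Gaussian integration by parts requires that the functionals $u_0\mapsto F(\la f,u_t\ra)$ and $u_0\mapsto\rho_{t,x}^{\scriptscriptstyle\rm poly}(y)$ be Fréchet differentiable with enough integrability and decay that the identity holds and that the $\Einit$-expectation may be exchanged with the $x,y,z$-integrations. The smoothing by $\psi$ (so that $u_0$ has smooth realizations) together with the linear-growth hypothesis on $h_0$ are what make these manipulations legitimate, and I would verify the differentiability of the Cole–Hopf solution map in the initial data and the requisite moment bounds on $\rho_{t,x}^{\scriptscriptstyle\rm poly}$ before invoking the formula.
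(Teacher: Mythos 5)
Your proof is correct and follows essentially the same route as the paper's: a Gaussian integration by parts in the initial-data randomness (the paper phrases it as a Malliavin derivative with respect to the white noise $\zeta$ underlying $u_0=\psi\ast\zeta$, acting on the polymer Radon--Nikodym derivative rather than directly on the density $\rho_{t,x}^{\scriptscriptstyle\rm poly}$), followed by the product rule, with the $F$-term killed by the same two-replica symmetry/antisymmetry cancellation and the leftover $a(y_1)$-type piece killed by $\int_\R f'(x_2)\,dx_2=0$. The two arguments correspond step for step.
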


Now we can finish the proof of invariance. 
 Note first of all that we can assume without loss of generality that our spatial white noise has mean zero.  This is because if $u$ is a solution of stochastic Burgers \eqref{SBE} and $m$ is a constant, then $v= u+m$ solves $\partial_t v = \tfrac12\partial_x v^2 \blue{-} m\partial_x v + \tfrac12\partial_x^2 v + \partial_x\xi$, so if mean zero spatial white noise is invariant, then mean $m$ spatial white noise is as well. Equivalently, showing Brownian motion is invariant for KPZ up to height shifts implies that Brownian motion with constant drift is as well.

 We start initially with $u_0=\partial_x h_0$ as a   spatial  white noise, run \eqref{KPZ} up to time $t$ and we want to prove that the solution $u_t$ at time $t$ is again a spatial white noise.  Equivalently, for any test function $f\in C^\infty_0(\R)$, $Y=\langle f,u_t\rangle$
has normal distribution with mean $0$ and variance $\sigma^2=\|f\|^2_{L^2(\R)}$.
By Stein's equation, it is enough to show that for any $F\in C^1(\R)$,  
\begin{equation}\label{e.stein}
E F(Y)Y=\sigma^2 E F'(Y),
\end{equation}
where $E$ is the expectation with respect to both the random background $\xi$ and the initial white noise $u_0$.  But \eqref{e.stein} follows from Prop. \ref{p.yt12} and Prop. \ref{Prop13} taking the limit as $\psi\ast\psi$   approximates the identity, i.e., $a(x)\to \tfrac12 {\rm sgn}(x)$.

In other words, if $u_0$ is spatial white noise, $\Einit\Gamma =0$.

\begin{remark}  Take initial data $u_0= $ white noise $+$ a smooth function $\bar{u}_0$.  We can write the solution to \eqref{SBE} as $u(t,x)= u^{\rm eq}(t,x) + \bar{u}(t,x)$ where $u^{\rm eq}$ is the solution of \eqref{SBE} with the white noise initial data (i.e. stochastic Burgers equilibrium solution) and 
$\bar{u}$ solves 
\begin{equation}
    \partial_t \bar{u} = 
    \partial_x (u^{\rm eq} \bar{u}) + \tfrac12 \partial_x \bar{u}^2 + \tfrac12\partial_x^2 \bar{u},\qquad \bar{u} (0,x) = \bar{u}_0(x)
\end{equation}
so we can write $\bar{u}=\partial_x\log \bar{Z}$ where
\begin{equation}
    \bar{Z}(t,x) = \E^{\scriptscriptstyle\rm poly,eq}_{t,x}[\bar{Z}_0(X_t)].
\end{equation}
Here $\E^{\scriptscriptstyle\rm poly,eq}_{t,x}$ is the expectation with respect to the polymer measure at stationarity.
 So far, this is standard.  On the other hand, if we use Prop. \ref{p.yt12} we get the following identity
 \begin{equation}
\label{e.yt1'} \begin{aligned} &\EE F(\langle f,u_t\rangle) \langle f,u_t-v_t\rangle= \|f\|_{L^2}^2\EE F'(\langle f,u_t\rangle), \qquad v_t(x)= \Epoly_{t,x} \bar{u}_0(X_t).  
\end{aligned}
\end{equation}
\end{remark}

\begin{remark} 
The integration by parts formula \eqref{e.yt1} can be viewed as a variant of the replica method, with only two replicas considered. Essentially, we computed  the two point covariance function $\EE \log Z(t,x_1)\partial_{x_2} \log Z(t,x_2)$. Using the replica trick, this expression can be written as:
\begin{equation}\label{e.replica}
\begin{aligned}
&\EE \log Z(t,x_1) \partial_{x_2}\log Z(t,x_2)=\lim_{n\to0} \EE \tfrac{Z^n(t,x_1)-1}{n} \partial_{x_2} \tfrac{Z^n(t,x_2)-1}{n}\\
&=\lim_{n\to0} \EE \tfrac{Z^n(t,x_1)-1}{n} \tfrac{n Z^{n-1}(t,x_2)\partial_{x_2} Z(t,x_2)}{n}\\
&=\lim_{n\to0} \EE \tfrac{Z^n(t,x_1)-1}{n}Z^{n-1}(t,x_2)\partial_{x_2} Z(t,x_2).
\end{aligned}
\end{equation}
For positive integer $n$,    $\EE Z^n(t,x_1)Z^{n-1}(t,x_2)\partial_{x_2} Z(t,x_2)$ can be written  in terms of $2n$ independent copies of   Brownian motions, and the usual problem is that we can not directly send $n\to0$ in that expression. The key trick here is to avoid integrating out all the Gaussian random variables in the Feynman-Kac representation. Instead, we focus only on the ``interaction" between $\partial_{x_2} Z(t,x_2)$ and $Z^{n-1}(t,x_2)$, $Z^n(t,x_1)$,  while neglecting the ``interaction" within $Z^{n-1}(t,x_2)$ and $Z^n(t,x_1)$, because for small $n$, it is difficult to compute it. On a formal level, integration by parts yields:
\[
\begin{aligned}
\EE \tfrac{Z^n(t,x_1)-1}{n}&Z^{n-1}(t,x_2)\partial_{x_2} Z(t,x_2)=\EE Z^{n-1}(t,x_1) Z^{n-1}(t,x_2) \\
&\times \E e^{\sum_{j=1}^2\int_0^t \xi(t-s,x_j+B^j_s)ds+h_0(x_j+B^j_t)} \int_0^t \delta'(x_2+B^2_s-x_1-B^1_s)ds.
\end{aligned}
\]
This holds for all $n>0$, and by sending $n\to0$ we obtain the same result as directly applying integration by parts to the left-hand side of \eqref{e.replica}:
\[
\begin{aligned}
\EE \log Z(t,x_1)  &\partial_{x_2} \log Z(t,x_2)=\EE Z^{-1}(t,x_1) Z^{-1}(t,x_2)\\
&\times \E e^{\sum_{j=1}^2 \int_0^t \xi(t-s,x_j+B^j_s)ds+h_0(x+B^j_t)} \int_0^t \delta'(x_2+B^2_s-x_1-B^1_s)ds.
\end{aligned}
\]
\end{remark}

\section{Integration by parts}\label{s.ibp}

The previous discussion shows that the non-trivial input is the integration by parts formula presented in Prop.~\ref{p.yt12}.  In order to prove it we will first prove a version with $\xi$ convolved in space by another smooth symmetric test function $\phi$ with finite support such that $\int \phi=1$.  We call
\begin{equation}\label{sown}
    \eta (t,x) =\int_\R dy~
    \phi(x-y)\xi(s,y).
\end{equation}
The smoothed out white noise has covariance \begin{equation}
    \EE \eta(t,x)\eta(s,y) = \delta(t-s) R(x-y)\qquad {\rm where}\qquad R=\phi\ast\phi.
\end{equation}
The Feynman-Kac representation of the solution to the stochastic heat equation is 
\begin{equation}\label{fkr}
Z(t,x)=\E e^{\int_0^t \eta(t-s,x+B_s)ds-\frac12R(0)t+h_0(x+B_t)},
\end{equation}
where $\E$ denotes expectation with respect to a standard Brownian motion $B$  on $\R$ starting at the origin.
Through the Cole-Hopf transformation, we can write the solution to the stochastic Burgers equation as
\begin{equation}\label{e.fkueps}
u(t,x)=\frac{ \E e^{\int_0^t \eta(t-s,x+B_s)ds+h_0(x+B_t)}[\int_0^t \partial_x\eta(t-s,x+B_s)ds+u_0(x+B_t)]}{\E e^{\int_0^t \eta(t-s,x+B_s)ds+h_0(x+B_t)}}.
\end{equation}

It is natural to write this as $u(t,x)= \Epoly_{t,x} [\int_0^t ds\partial_x\eta(t-s,X_s)+u_0(X_t)]$ where the expectation $\Epoly_{t,x}$ is with respect to the \emph{polymer measure} $\mathbb{P}^{\scriptscriptstyle\rm poly}_{t,x}$,  the measure on continuous paths $X_\cdot$ on $[0,t]$
starting at $x$ which is the Wiener measure tilted by the factor $e^{-H}$ where 
\begin{equation}
    H= H(t,x, X_\cdot)= -\int_0^t \eta(t-s,X_s)ds-h_0(X_t).
\end{equation} 
In other words, for any bounded functional $\Phi:C_x[0,t]\to\R$,
\begin{equation}
\E^{\scriptscriptstyle\rm poly}_{t,x} \Phi=\frac{\E e^{\int_0^t \eta(t-s,x+B_s)ds+h_0(x+B_t)} \Phi(x+B_\cdot)}{\E e^{\int_0^t \eta(t-s,x+B_s)ds+h_0(x+B_t)}}= Z^{-1}(t,x) \E_{x} \Phi e^{-H},
\end{equation}
where $\E_x$ denotes expectation with respect to Brownian motion starting at $x$.
If we  write $\rho^{\scriptscriptstyle\rm poly}_{t,x}(s,y) $ for the density at time $s$ of the polymer path $X_\cdot$ (\blue{which can actually be written explicitly in terms of the Green's function of SHE}), 
\begin{equation}
\rho^{\scriptscriptstyle\rm poly}_{t,x}(s,y) =\E^{\scriptscriptstyle\rm poly}_{t,x}\delta_{y}(X_s),
\end{equation}
the first term in \eqref{e.fkueps} reads
\begin{equation}
   \int_0^t ds\int_\R dy~ \partial_x\eta(t-s,y) \rho^{\scriptscriptstyle\rm poly}_{t,x}(s,y). 
\end{equation}
With some abuse of notations,   the $\rhopoly(y)$ defined in \eqref{e.qdensity} is just $\rhopoly(t,y)$.
The polymer measure depends on the whole environment $\eta$ as well as the initial data $h_0$ and this stochastic integral should be interpreted in the Stratonovich sense. 

\begin{proposition}\label{p.yt120}
Let $u_t$ be the solution of the stochastic Burgers equation \eqref{SBE} forced by the smoothed out noise $\partial_x\eta$ \eqref{sown} and with initial data $u_0$.  Then with the polymer densities corresponding to the smoothed out noise $\eta$ described above,
\begin{equation}\label{e.yt1smooth}
\begin{aligned}
&\EE F(\langle f,u_t\rangle) \int_0^t ds \int_{\R} dy~ \partial_x\eta(t-s,y) \rho^{\scriptscriptstyle\rm poly}_{t,x}(s,y)   \\&=\int_{\R^3} dx_2 dy_1dy_2f'(x_2) \EE F'(Y)  \int_0^t ds~ R'( y_2-y_1) \rho_{t,x}^{\scriptscriptstyle\rm poly}(s,y_1)\rho_{t,x_2}^{\scriptscriptstyle\rm poly}(s,y_2).
\end{aligned}
\end{equation}
\end{proposition}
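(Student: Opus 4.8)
The plan is to prove \eqref{e.yt1smooth} by a single Gaussian integration by parts in the smoothed environment $\eta$, exploiting that $\eta$ is white in time and smooth in space. The starting observation is the covariance of the driving field: since $\EE\,\eta(\tau,y)\eta(\tau',y')=\delta(\tau-\tau')R(y-y')$, differentiating in the first spatial slot gives
\begin{equation*}
\EE\,\partial_x\eta(t-s,y)\,\eta(t-s',y')=\delta(s-s')\,R'(y-y').
\end{equation*}
Treating $\Phi=F(Y)\,\rho^{\scriptscriptstyle\rm poly}_{t,x}(s,y)$ as a functional of the whole field, the Gaussian (Stein) integration by parts then yields
\begin{equation*}
\EE\,\partial_x\eta(t-s,y)\,\Phi=\int_\R dy'\,R'(y-y')\,\EE\,\frac{\delta\Phi}{\delta\eta(t-s,y')},
\end{equation*}
after which the outer integrals $\int_0^t ds\int_\R dy$ are restored. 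Everything then reduces to the two functional derivatives appearing after the product rule and to tracking two cancellations.

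The key derivatives come directly from the Feynman--Kac representation \eqref{fkr}--\eqref{e.fkueps}. Writing $\mathcal H=\int_0^t\eta(t-r,X_r)dr+h_0(X_t)$, one has $\frac{\delta\mathcal H}{\delta\eta(t-s,y')}=\delta_{y'}(X_s)$, whence $\frac{\delta}{\delta\eta(t-s,y')}\log Z(t,x_2)=\rho^{\scriptscriptstyle\rm poly}_{t,x_2}(s,y')$; since $u_t(x_2)=\partial_{x_2}\log Z(t,x_2)$ and $f$ has compact support, integrating by parts in $x_2$ gives
\begin{equation*}
\frac{\delta Y}{\delta\eta(t-s,y')}=\int_\R dx_2\,f(x_2)\,\partial_{x_2}\rho^{\scriptscriptstyle\rm poly}_{t,x_2}(s,y')=-\int_\R dx_2\,f'(x_2)\,\rho^{\scriptscriptstyle\rm poly}_{t,x_2}(s,y').
\end{equation*}
The same computation applied to the ratio defining the polymer density produces the ``two--replica'' derivative
\begin{equation*}
\frac{\delta\rho^{\scriptscriptstyle\rm poly}_{t,x}(s,y)}{\delta\eta(t-s,y')}=\delta(y-y')\,\rho^{\scriptscriptstyle\rm poly}_{t,x}(s,y)-\rho^{\scriptscriptstyle\rm poly}_{t,x}(s,y)\,\rho^{\scriptscriptstyle\rm poly}_{t,x}(s,y').
\end{equation*}

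Inserting these through the product rule $\frac{\delta\Phi}{\delta\eta}=F'(Y)\frac{\delta Y}{\delta\eta}\,\rho^{\scriptscriptstyle\rm poly}_{t,x}(s,y)+F(Y)\frac{\delta\rho^{\scriptscriptstyle\rm poly}_{t,x}(s,y)}{\delta\eta}$ splits the answer into an $F'$--term and an $F$--term. The $F'$--term, after relabeling $y\to y_1$, $y'\to y_2$ and using $R'(y_1-y_2)=-R'(y_2-y_1)$, is precisely the right-hand side of \eqref{e.yt1smooth}. The $F$--term must therefore vanish, and this is where the cancellations enter: the diagonal piece carries the factor $\int_\R dy'\,R'(y-y')\delta(y-y')=R'(0)=0$ since $R=\phi\ast\phi$ is even, while the off-diagonal piece $-\int_\R dy\,dy'\,R'(y-y')\rho^{\scriptscriptstyle\rm poly}_{t,x}(s,y)\rho^{\scriptscriptstyle\rm poly}_{t,x}(s,y')$ vanishes because $R'$ is odd whereas the product of the two \emph{identical} polymer densities is symmetric in $(y,y')$. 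This is the hidden cancellation advertised in the introduction, and it is what leaves only the desired $F'$--term.

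The step I expect to demand the most care is the Gaussian integration by parts itself, specifically the legitimacy of differentiating $\eta$ at the very time slice $t-s$ that also appears in $\rho^{\scriptscriptstyle\rm poly}_{t,x}(s,\cdot)$. Because the time integral in $\mathcal H$ is a Stratonovich integral, the coincidence of times is exactly what makes $\frac{\delta\mathcal H}{\delta\eta(t-s,y')}=\delta_{y'}(X_s)$ hold with full (rather than one-half) weight; one must verify that the symmetric Stratonovich convention and the finite-dimensional Gaussian by-parts formula are compatible, e.g.\ by first discretizing in time or mollifying $\eta$ in time, carrying out the finite-dimensional computation, and then passing to the limit. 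The spatial smoothing supplied by $\phi$ is what keeps every pointwise-in-space object ($R'(0)$, the densities, and their products) well defined throughout, so at this smoothed level no spatial renormalization is needed and the displayed identities can be justified as written.
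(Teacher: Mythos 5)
Your proposal is correct and, at the level of the algebra, it is essentially the paper's own proof: a Gaussian integration by parts in the environment, the product rule splitting the result into an $F'$-term and an $F$-term, the $F'$-term becoming the right-hand side of \eqref{e.yt1smooth} after relabelling and using $R'(y_1-y_2)=-R'(y_2-y_1)$, and the $F$-term vanishing by exactly the two cancellations the paper isolates in its lemma: your diagonal piece is the paper's $\int_\R dy\,\phi(X_{t-s}-y)\phi'(X_{t-s}-y)=0$ (i.e.\ $R'(0)=0$), and your off-diagonal piece (odd kernel $R'$ against the symmetric product of two identical densities) is the paper's $\Epoly_{t,x,x} R'(X^1_{t-s}-X^2_{t-s})=0$ by exchangeability of the two replicas. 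The one genuine difference is precisely the step you flag as delicate. You take functional derivatives with respect to $\eta$ acting on the quenched-averaged densities $\rho^{\scriptscriptstyle\rm poly}_{t,x}(s,y)$, so you must make sense of the product $\partial_x\eta(t-s,y)\,\rho^{\scriptscriptstyle\rm poly}_{t,x}(s,y)$ at coinciding times and justify the full-weight derivative $\delta_{y'}(X_s)$, and you propose to do this by time discretization or mollification. The paper sidesteps this entirely with a cleaner device: it conditions on (freezes) the Brownian path and takes Malliavin derivatives with respect to the underlying white noise $\xi$ rather than $\eta$. For a frozen path the integrand $\phi'(X_{t-s}-y)$ is deterministic, so the stochastic integral $\int_0^t ds\int_\R dy\,\phi'(X_{t-s}-y)\xi(s,y)$ is a classical Wiener integral and the integration by parts is the standard duality $\EE[W(g)G]=\EE\langle \cD G,g\rangle_{L^2}$; moreover all derivatives come out as smooth functions (e.g.\ $\cD_{s,y}H=-\phi(X_{t-s}-y)$ rather than a delta), so the Stratonovich/weight ambiguity never arises, and only a Fubini exchange of $\EE$ and $\E_x$ needs checking. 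So your mechanism is the right one and every cancellation is in exact correspondence with the paper's, but to make your version rigorous as written you would still have to carry out the time-mollification limit, whereas the frozen-path formulation gets that step for free.
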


The proofs of Prop. \ref{p.yt120}  and Prop. \ref{Prop13} are  rather straightforward integration by parts in the Gaussian spaces associated with $\eta$ and $u_0$. And although we use the language of Malliavin calculus, essentially all we are using is that 
$
\EE F(Z_1)Z_2=\EE F'(Z_1)\mathrm{cov}[Z_1,Z_2]
$ for two correlated centered Gaussian random variables $Z_1,Z_2$. The proofs are given in Section \ref{ibp}. For an introduction to basic Malliavin calculus, we refer to \cite[Chapter 1]{Nua06}.  

At this point, we continue with the proof of Prop. \ref{p.yt12} given Prop. \ref{p.yt120}.
Defining
\begin{equation}
    r'(x)= R(x),\qquad r(0)=0,
\end{equation}
we can write the last term in \eqref{e.yt1} 
\begin{equation}
    \int_0^t ds \int_{\R^2}dy_1dy_2R'( y_2-y_1) \rho_{t,x}^{\scriptscriptstyle\rm poly}(s,y_1)\rho_{t,x_2}^{\scriptscriptstyle\rm poly}(s,y_2)= \Epoly_{t,x_1,x_2} \int_0^t ds~ r''( X^2_{s}-X^1_s).
\end{equation}
Here $\Epoly_{t,x_1,x_2}$ is the quenched expectation on the two independent polymer paths starting at $x_1,x_2$ respectively. \blue{By It\^o's formula (for Brownian motions)}, \begin{equation}
\int_0^t ds~ r''(X^2_s-X^1_s)=r(X^2_t-X^1_t)- r(x_2-x_1) -\int_0^t R(X^2_s-X^1_s)d(X^2_s-X^1_s).
\end{equation}
To proceed further we choose $R$ and $\phi$  to be 
\begin{equation}\label{e.Reps}
     R_\eps(x) = \eps^{-1}R(\eps^{-1} x),\qquad \phi_\eps(x)= \eps^{-1}\phi(\eps^{-1} x).
\end{equation}
As $\eps\to 0$ it is straightforward to check that 
\begin{equation}
r_\eps (x)\to \tfrac12{\rm sgn}(x)
\end{equation}
and the polymer densities converge, so that Prop. \ref{p.yt12} is recovered from Prop. \ref{p.yt120} in the limit once one has the non-trivial fact that 
\begin{equation}\label{e.cancellation}
\lim_{\eps\to 0} \Epoly_{t,x_1,x_2}\int_0^t R_\eps(X^2_s-X^1_s)d(X^2_s-X^1_s) =0. 
\end{equation}This is the hidden symmetry that keeps white noise invariant. More precisely, we define a class of initial conditions satisfying growth conditions, 
\begin{equation}\label{e.Bab}
\mathscr{B}_{\alpha,\beta} 
= \{ h_0: |h_0(x)| \le \alpha + \beta |x|\}.
\end{equation} We will show in the next section that

\begin{proposition}
\label{p22}
    For any $h_0\in \mathscr{B}_{\alpha,\beta}$ there is a $C$ depending on $t$ such ,
    \begin{equation}
    \label{e:315}
        \EE \big|\Epoly_{t,x_1,x_2}\int_0^t R_\eps(X^2_s-X^1_s)d(X^2_s-X^1_s)\big| \le Ce^{C(|x_1|+|x_2|)} \blue{\eps^{1/2} |\log\eps|^{1/4}.}
    \end{equation}
\end{proposition}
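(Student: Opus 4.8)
The plan is to prove that the \emph{quenched mean} $\Epoly_{t,x_1,x_2}I_\eps$, where $I_\eps:=\int_0^t R_\eps(D_s)\,dD_s$ and $D_s:=X^2_s-X^1_s$, is already small in $L^1(\EE)$, with the smallness coming from the signed (It\^o) structure of the integral rather than from any pathwise bound on $|I_\eps|$. This distinction is the whole point: the martingale part of $I_\eps$ has quenched variance of order $\eps^{-1}$ times the collision local time of the two paths, so any estimate that passes the absolute value inside before taking expectations is hopeless. The cancellation must be kept.

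The first step is a quenched martingale decomposition. Given the environment, each polymer path solves $dX^i_s=dM^i_s+b(s,X^i_s)\,ds$ with $M^1,M^2$ independent $\Ppoly$--Brownian motions, and unwinding the Feynman--Kac formula \eqref{fkr} shows that the partition function of a path from time $s$ onward is exactly $Z(t-s,\cdot)$, so the drift is the Burgers field itself, $b(s,y)=\partial_y\log Z(t-s,y)=u(t-s,y)$. Both paths thus feel the \emph{same} field $u(t-s,\cdot)$, the martingale part of $D$ satisfies $d\langle D\rangle_s=2\,ds$, and since $\int_0^t R_\eps(D_s)\,dM^i_s$ has zero quenched mean,
\[
\begin{aligned}
\Epoly_{t,x_1,x_2}I_\eps=\int_0^t ds\iint_{\R^2} R_\eps(y_2-y_1)&\big(u(t-s,y_2)-u(t-s,y_1)\big)\\
&\times\rho_{t,x_1}^{\scriptscriptstyle\rm poly}(s,y_1)\,\rho_{t,x_2}^{\scriptscriptstyle\rm poly}(s,y_2)\,dy_1dy_2=:\int_0^t A_\eps(s)\,ds.
\end{aligned}
\]
The remaining task is purely to estimate this signed double integral, with no stochastic integration left.

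The power of $\eps$ comes from a near-diagonal cancellation. Since $R_\eps$ is even and supported in $[-\eps,\eps]$, I would symmetrize in $(y_1,y_2)$ and pass to $v=\tfrac12(y_1+y_2)$, $w=y_2-y_1$, writing $A_\eps(s)=\tfrac12\iint R_\eps(w)\,(u(t-s,v+\tfrac w2)-u(t-s,v-\tfrac w2))\,[\rho_{t,x_1}^{\scriptscriptstyle\rm poly}\rho_{t,x_2}^{\scriptscriptstyle\rm poly}-(1\!\leftrightarrow\!2)]\,dv\,dw$. Both the drift increment and the antisymmetrized density factor are odd in $w$ and vanish to first order on the diagonal, so to leading order the integrand is $R_\eps(w)\,w^2\,\partial_v u(t-s,v)\,\big(\rho_{t,x_1}^{\scriptscriptstyle\rm poly}\partial_v\rho_{t,x_2}^{\scriptscriptstyle\rm poly}-\partial_v\rho_{t,x_1}^{\scriptscriptstyle\rm poly}\rho_{t,x_2}^{\scriptscriptstyle\rm poly}\big)(s,v)$. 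Because $\int R_\eps(w)\,w\,dw=0$ the $O(\eps)$ term cancels, and $\int R_\eps(w)\,w^2\,dw=O(\eps^2)$ multiplies a spatial integral of $\partial_v u$ against a Wronskian-type combination of the two polymer densities.

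Finally I would estimate this in $L^1(\EE)$ by bounding $\EE\big(\int_0^t A_\eps(s)\,ds\big)^2$ and taking a square root. Here one must not take absolute values inside $\EE$: the spatial derivative $\partial_v u$ of the mollified Burgers field is a large negative power of $\eps$, so the crude bound fails and one must retain the Gaussian structure, expanding the second moment into a double time integral and evaluating it with the Gaussian integration-by-parts (Malliavin) machinery already used for Prop.~\ref{p.yt120}. The confinement of $y_1,y_2$ to within $\eps$ together with the decay of the structure function of $u$ supplies the smallness, while the $e^{C(|x_1|+|x_2|)}$ prefactor is produced by the Gaussian-in-space tails of the point-to-point partition functions under $h_0\in\mathscr{B}_{\alpha,\beta}$. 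The main obstacle, and the source of the $|\log\eps|^{1/2}$, is the short-time regime $s\to t$ where $u(t-s,\cdot)$ becomes increasingly rough: the controlling time integral is only logarithmically convergent, is cut off at the scale set by $\eps$, and the square root from the second-moment bound turns $|\log\eps|$ into $|\log\eps|^{1/2}$.
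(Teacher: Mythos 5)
Your first step is sound, and it is in fact the quenched analogue of what the paper does: under the quenched product polymer measure each path is a diffusion whose drift is the (mollified) Burgers field $u(t-s,\cdot)=\partial_y\log Z(t-s,\cdot)$, the martingale part of $\int_0^t R_\eps(D_s)\,dD_s$ (with $D_s=X^2_s-X^1_s$) has zero quenched mean, and one is left with a drift integral. The paper performs the same martingale-killing step (Lemma~\ref{l.2ndmmre1}), but only \emph{after} annealing: it first separates the denominator $Z(t,x_1)Z(t,x_2)$ by Cauchy--Schwarz (Lemma~\ref{l.cs}, using negative moments of $Z$), computes $\EE\mathcal{X}_{t,x_1,x_2}^2$ exactly as a four-path expectation with potential $\VV$ (Lemma~\ref{l.2ndmmre}), and only then applies Girsanov, so that the drift entering the estimate is $\bfU=\nabla\log\cZ$ for the \emph{annealed} partition function --- a deterministic field bounded by $Ce^{C\beta\sum_i|x_i|}\bigl(1+\bigl|\log|{\rm supp}\,R|\bigr|\bigr)$ (Lemma~\ref{l.mmlocaltime}). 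This ordering is not cosmetic. Your drift is the quenched field $u(t-s,\cdot)$, which at mollification scale $\eps$ has amplitude of order $\eps^{-1/2}$ and spatial derivative of order $\eps^{-3/2}$; no bound remotely like the paper's holds for it.

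This is where your step 2 breaks down. The ``leading order'' Taylor expansion is not legitimate: $u(t-s,\cdot)$ and the polymer densities have no smoothness below scale $\eps$, so each $v$-derivative costs a factor $\eps^{-1}$ while each power of $w$ gains at most $\eps$ on ${\rm supp}\,R_\eps$ --- every term of the expansion is of the same size, and nothing is gained from $\int R_\eps(w)w^2\,dw=O(\eps^2)$. Concretely, with $\partial_v u\sim\eps^{-3/2}$ and derivatives of the densities of order $\eps^{-1/2}$ (they are built from SHE-type partition functions, H\"older-$\tfrac12$ at best), your claimed main term $\eps^2\,\partial_v u\,\bigl(\rho_1\partial_v\rho_2-\rho_2\partial_v\rho_1\bigr)$ is $O(1)$ pointwise: the deterministic bookkeeping yields no power of $\eps$ at all, let alone $\eps|\log\eps|^{1/2}$. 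All of the smallness must then come from stochastic cancellation in $v$, $s$ and the environment, i.e.\ from the second-moment computation of your step 3 --- which is precisely the entire difficulty and is not carried out; moreover, as sketched it runs into the obstruction the paper engineered around: the polymer densities carry $Z^{-1}$ factors, so $\EE\bigl(\int_0^t A_\eps(s)\,ds\bigr)^2$ is not a polynomial functional of the Gaussian field and cannot be evaluated by Wick/Malliavin calculus alone; one needs a Cauchy--Schwarz separation plus negative-moment bounds, at which point one is back on the paper's route. Finally, note that the paper's factor $\eps$ has a source your argument never touches: by the exchangeability/antisymmetry observation below \eqref{def:sigmatau}, the stochastic integral may be restricted to $[\sigma,\tau]$, the times \emph{before} the two paths first meet, and then $\int_\sigma^\tau R_\eps(D_s)\,ds=O(\eps)$ by hitting-time and scaling arguments; without that restriction this occupation integral is $O(1)$ (it converges to local time). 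Both the power $\eps$ and the factor $|\log\eps|^{1/2}$ in your proposal therefore rest on unsubstantiated mechanisms, and the argument as written has a genuine gap.
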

The bound on the right hand side suffices to take the desired limits to obtain Prop. \ref{p.yt12} because the test functions $f$ there are assumed to have compact support.

\section{Proof of Proposition~\ref{p22}}
\label{s.cancellation}

\blue{The goal of this section is to establish the key cancellation in \eqref{e.cancellation}. There are different ways to understand this result. Our approach relies on considering the first meeting time of the two paths. Once they meet, they continue from the same location and thus become exchangeable. Since the Itô integral in \eqref{e.cancellation} is antisymmetric, it follows that the contribution to the integral after the meeting time is zero. Furthermore, the fact that 
 $R_\eps$ approximates the Dirac function ensures that the integral before the meeting time vanishes as $\eps\to0$.}
 
 \blue{ An alternative, and potentially more intricate, way to analyze the integral is to rewrite it as 
\[
\int_0^t \int_{\R^2}R_\eps(y_2-y_1)[u(t-s,y_2)-u(t-s,y_1)]\rho^{\scriptscriptstyle\rm poly}_{t,x_2}(s,y_2)\rho^{\scriptscriptstyle\rm poly}_{t,x_2}(s,y_2)dy_1dy_2ds.
\]
On the formal level, as $R_\eps\to \delta$, the term $u(t-s,y_2)-u(t-s,y_1)$ vanishes in the limit. However since the above expression is purely symbolic when $\eps=0$, we do not pursue this approach further, though it is instructive to mention it here.}

 We first provide estimates for general $R(\cdot)$. In the end, $R$ will be chosen as in \eqref{e.Reps} to complete the proof. Let $\mathcal{E}_{t,x_1,x_2}$ denote the term in absolute values in
\eqref{e:315}.
We can write it as
\begin{align}\label{e.4.1}
&\mathcal{E}_{t,x_1,x_2}=
\frac{\E_{x_1,x_2}  e^{\sum_{j=1}^2\int_0^t \eta(t-s,X^j_s)ds-\frac{t}2R(0)+h_0(X^j_t)}\int_\sigma^\tau R(X^1_s-X^2_s)d(X^1_s-X^2_s) }{\E_{x_1,x_2} e^{\sum_{j=1}^2\int_0^t \eta(t-s,X^j_s)ds-\frac{t}2R(0)+h_0(X^j_t)}},
\end{align}
where $\E_{x_1,x_2}$ is the expectation with respect to two independent Brownian motions starting at $x_1,x_2$, and 
\begin{equation}\label{def:sigmatau}
    \sigma= \inf\{ s\ge 0 : X^1_s-X^2_s \in {\rm supp} \,R\} \wedge t,\qquad \tau= \inf\{ s\ge 0 : X^1_s-X^2_s =0\} \wedge t.
\end{equation}
This is because the integration from $0$ to $\sigma$ vanishes because $R=0$ there and the integration from $\tau$ to $t$ vanishes because, if $\tau<t$ then after $\tau$, $X^1$ and $X^2$ are exchangeble, but the integral is antisymmetric.

It is hard to analyse the polymer measure directly.  On the other hand, for fixed $t>0$, the denominator in \eqref{e.4.1} will have a nice strictly positive limit as the smoothing kernel becomes an identity, and it is the top that is small.   This inspires us to estimate it by a simple application of Cauchy-Schwarz which separates the numerator and denominator. Let's call the numerator 
\begin{equation}\label{e.defXeps}
\begin{aligned}
\mathcal{X}_{t,x_1,x_2}=\E_{x_1,x_2}  e^{\sum_{j=1}^2\int_0^t \eta(t-s,X^j_s)ds-\frac{t}2R(0)+h_0(X^j_t)}\int_\sigma^\tau R(X^1_s-X^2_s)d(X^1_s-X^2_s)
\end{aligned}
\end{equation}
and note that the denominator 
\begin{equation}\label{den}
  \E_{x_1,x_2}  e^{\sum_{j=1}^2\int_0^t \eta(t-s,X^j_s)ds-\frac{t}2R(0)+h_0(X^j_t)}= Z(t,x_1)Z(t,x_2),  
\end{equation}
where $Z(0,x) = e^{h_0(x)}$.

\begin{lemma}\label{l.cs} \blue{Assuming $R(\cdot)=\tfrac{1}{\eps}\phi\star\phi(\tfrac{\cdot}{\eps})$}. There exists a $C<\infty$ depending on $t,\alpha,\beta<\infty$ but not $\eps$  such that for any $h_0\in \mathscr{B}_{\alpha,\beta} $,
\begin{equation}
\EE |\mathcal{E}_{t,x_1,x_2}| \leq C e^{ \beta(|x_1|+|x_2|)} \sqrt{\EE \mathcal{X}_{t,x_1,x_2}^2}.\end{equation} 
\end{lemma}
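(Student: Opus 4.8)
The plan is to reduce the estimate to a uniform (in the mollification $R$) bound on the negative moments of $Z$. Writing $\mathcal{E}_{t,x_1,x_2} = \mathcal{X}_{t,x_1,x_2}/(Z(t,x_1)Z(t,x_2))$ and applying the Cauchy--Schwarz inequality in the probability space of $\eta$ gives
\begin{equation*}
\EE|\mathcal{E}_{t,x_1,x_2}| \le \big(\EE \mathcal{X}_{t,x_1,x_2}^2\big)^{1/2}\big(\EE\, Z(t,x_1)^{-2}Z(t,x_2)^{-2}\big)^{1/2},
\end{equation*}
and a second Cauchy--Schwarz to decouple the two points yields $\EE Z(t,x_1)^{-2}Z(t,x_2)^{-2}\le (\EE Z(t,x_1)^{-4})^{1/2}(\EE Z(t,x_2)^{-4})^{1/2}$. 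Thus the lemma follows once we prove the uniform negative moment bound $\EE Z(t,x)^{-4}\le C e^{4\beta|x|}$ with $C=C(t,\alpha,\beta)$ independent of $R$, since then $(\EE Z(t,x_i)^{-4})^{1/4}\le C^{1/4}e^{\beta|x_i|}$.

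The factor $e^{4\beta|x|}$ I would extract by hand from the initial data. Since $|h_0|\le\alpha+\beta|\cdot|$, writing $B_s=x+B'_s$ with $B'$ a Brownian motion from the origin gives $h_0(B_t)\ge -\alpha-\beta|x|-\beta|B'_t|$, so from \eqref{fkr} one has $Z(t,x)\ge e^{-\alpha-\beta|x|}\tilde Z(t,x)$, where $\tilde Z(t,x)=\E_0 e^{\int_0^t\eta(t-s,x+B'_s)ds-\frac t2 R(0)-\beta|B'_t|}$. By the spatial translation invariance of the law of $\eta$, the distribution of $\tilde Z(t,x)$ does not depend on $x$, whence $\EE Z(t,x)^{-4}\le e^{4\alpha+4\beta|x|}\,\EE\tilde Z(t,0)^{-4}$, and it remains to bound $\EE\tilde Z(t,0)^{-4}$ uniformly in $R$. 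Here $\tilde Z(t,0)$ is a mollified stochastic heat equation with the fixed bounded, integrable initial datum $e^{-\beta|y|}$.

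For this last and genuinely delicate step I would use Gaussian concentration rather than Jensen's inequality: the naive lower bound is useless because the It\^o correction $R(0)=R_\eps(0)\to\infty$. Instead, viewing $\log\tilde Z$ as a functional of the underlying white noise, its Malliavin derivative is the smoothed polymer one-point density, $D_{\tau,y}\log\tilde Z=\Epoly_{t,0}[\phi(X_{t-\tau}-y)]$, whose Cameron--Martin norm squared is $\int_0^t\|\rho^{\scriptscriptstyle\rm poly}(s,\cdot)\ast\phi\|_{L^2(\R)}^2\,ds$. The crucial point is that this is bounded uniformly in $\eps$: away from $s=0$ the polymer density has width $\gtrsim\sqrt s$, so $\|\rho^{\scriptscriptstyle\rm poly}(s)\ast\phi\|_{L^2}^2\lesssim s^{-1/2}$ is integrable, while the near-diagonal region $s\lesssim\eps^2$ contributes only $O(\eps)$. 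Feeding this into the Gaussian logarithmic Sobolev (Borell--TIS) inequality gives uniform sub-Gaussian concentration of $\log\tilde Z$ about its mean, hence $\EE e^{-4\log\tilde Z}\le e^{-4\EE\log\tilde Z+C}$ uniformly in $\eps$.

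The main obstacle is precisely this uniform negative moment bound, i.e. controlling the lower tail of $\tilde Z$ (equivalently a small-ball estimate $\Pb(\tilde Z\le\delta)$) uniformly as the noise is unmollified. The two ingredients that require real work are the uniform $L^2$ estimate on the expected polymer density, $\EE\|\rho^{\scriptscriptstyle\rm poly}(s)\ast\phi\|_{L^2}^2\lesssim s^{-1/2}$, which is a two-point (two independent polymer) computation, and a uniform lower bound $\EE\log\tilde Z\ge -C$, which does \emph{not} follow from Jensen's inequality and must instead be obtained by combining the concentration estimate with a direct restricted-path lower bound on $\tilde Z$ --- restricting $B'$ to stay in a bounded neighborhood of the origin, where both the initial datum and the environment contribution are controlled.
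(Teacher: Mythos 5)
Your reduction is exactly the paper's: Cauchy--Schwarz to separate $\EE|\mathcal{E}_{t,x_1,x_2}|\le(\EE\mathcal{X}_{t,x_1,x_2}^2)^{1/2}(\EE(Z(t,x_1)Z(t,x_2))^{-2})^{1/2}$, a second Cauchy--Schwarz to reduce to $\EE Z(t,x)^{-4}$, and then peeling off the $e^{\beta|x|}$ growth coming from $h_0\in\mathscr{B}_{\alpha,\beta}$ so that what remains is a negative-moment bound uniform in the mollification $R$. (The paper extracts the growth factor slightly differently --- Jensen's inequality for $u\mapsto u^{-4}$ over the heat-kernel measure, writing $Z(t,x)=\int p_t(x-y)e^{h_0(y)}z(t,x,y)\,dy$ with $z$ the Brownian-bridge partition function --- but your pointwise bound $h_0(x+B'_t)\ge-\alpha-\beta|x|-\beta|B'_t|$ plus translation invariance accomplishes the same thing.) The decisive difference is the crux: the paper does \emph{not} prove the uniform negative-moment bound at all; it invokes it as a known result, citing \cite[Corollary 4.8]{HL22} for $\EE z(t,x,y)^{-4}\le C$ independent of $R$.

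Your attempt to prove that input via Gaussian concentration has a genuine gap. Borell--TIS/log-Sobolev concentration for $\log\tilde Z$ requires an \emph{almost sure} (essential supremum) bound on the Cameron--Martin norm of the Malliavin derivative, not a bound in expectation. Almost surely, all one can say is
\begin{equation*}
\int_0^t\big\|\rho^{\scriptscriptstyle\rm poly}(s,\cdot)\ast\phi\big\|_{L^2(\R)}^2\,ds
\;\le\; t\,\|\phi\|_{L^\infty}\lesssim t\,\eps^{-1},
\end{equation*}
since the quenched polymer density is a probability density and nothing prevents it from concentrating on scale $\eps$ in atypical (high-peak) environments; your heuristic that the density has ``width $\gtrsim\sqrt s$'' is a statement about its expectation, not a uniform pathwise bound. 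So the Lipschitz constant entering the concentration inequality degenerates as $\eps\to0$, and the resulting tail bound is not uniform in $R$ --- which is precisely the uniformity the lemma needs. Bounds on the \emph{expected} squared derivative only yield Poincar\'e-type variance control, far too weak for the lower tail of $\log\tilde Z$ (equivalently $\EE\tilde Z^{-4}$). Moreover your final ingredient, $\EE\log\tilde Z\ge-C$ uniformly in $\eps$, is itself of the same depth as the negative-moment problem, and obtaining it ``by combining the concentration estimate with a restricted-path lower bound'' is circular as sketched. The honest fix is the one the paper takes: quote an existing uniform negative-moment estimate for the mollified stochastic heat equation (e.g.\ \cite[Corollary 4.8]{HL22}), after which your reduction closes the proof.
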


\begin{proof}
 By Cauchy-Schwarz inequality, 
 \begin{equation}
\EE |\mathcal{E}_{t,x_1,x_2}| \leq (\EE (Z(t,x_1)Z(t,x_2))^{-2})^{1/2}(\EE \mathcal{X}_{t,x_1,x_2}^2)^{1/2}.
\end{equation}
We can write 
\begin{equation}
    Z(t,x) = \int_{\R} dy  \, p_t(x-y)e^{h_0(y)}z(t,x,y),
\end{equation}
where $p_t(x)=(2\pi t)^{-1/2}e^{-\frac{x^2}{2t}}$ is the standard heat kernel and $z(t,x,y)$ can be written in terms of  $\E_{x\to y}$, the expectation over the Brownian bridge from $x$ to $y$:
\begin{equation}
    z(t,x,y) = \E_{x\to y}[ e^{\int_0^t \eta(t-s,X_s)ds-\frac{t}2R(0)}].
\end{equation}
It suffices to bound $\EE Z(t,x)^{-4}$. By Jensen's inequality applied to the function $x^{-4}$ which is convex on $\R_+$, we have 
\[
\EE Z(t,x)^{-4} \leq \int_{\R}  dy \,p_t(x-y)e^{-4h_0(y)} \EE z(t,x,y)^{-4} .
\]
Note that the latter has $\EE z(t,x,y)^{-4}$, which is independent of  $x$ or $y$ by affine invariance of $\xi$, and it has a  finite upper bound independent of $R(\cdot)$ \cite[Corollary 4.8]{HL22}.  
Now if 
$h_0\in \mathscr{B}_{\alpha,\beta} $,
we have 
\begin{align*}
     \int_{\R} dy p_t(x-y) e^{-4h_0(y)}\le \int_{\R} dy p_t(x-y) (e^{4\alpha + 4\beta y } + e^{4\alpha - 4\beta y }),
\end{align*}
which gives the required bound.
\end{proof}

\begin{lemma}\label{l.2ndmmre} Let $\E_{\x}$ be expectation with respect to a Brownian motion in $\R^4$ starting at $\x$.  Then
\begin{equation}\label{e.2ndmmX}
\begin{aligned}
\EE \mathcal{X}_{t,x_1,x_2}^2= \E_{x_1,x_2,x_1,x_2}& e^{ \int_0^t \VV({\bf X}_{s})ds+\LL({\bf X}_t)}
\prod_{j=1}^2 \int_{\sigma_j}^{\tau_j} R(X_{s}^{2j-1}-X_{s}^{2j})d(X_{s}^{2j-1}-X_{s}^{2j}),
\end{aligned}
\end{equation}
where \begin{equation}\label{VVee} \VV(\x)=\sum_{1\le i<j\le 4}R(x_i-x_j)\qquad \LL(\x) = \sum_{i=1}^4 h_0(x_i). 
\end{equation}
Note here that $\sigma_j=\sigma(X^{2j-1}_\cdot-X^{2j}_\cdot)$ and $\tau_j=\tau(X^{2j-1}_\cdot-X^{2j}_\cdot)$ are functionals of the paths $X^{2j-1}-X^{2j}$ defined analogously to \eqref{def:sigmatau} as the first time up to $t$ that they enter the support of $R(\cdot)$, and hit $0$.
\end{lemma}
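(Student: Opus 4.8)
The plan is to expand the square $\mathcal{X}_{t,x_1,x_2}^2$ into a Brownian expectation over four independent paths and then perform the Gaussian ($\eta$) expectation explicitly. Writing the two identical factors of $\mathcal{X}_{t,x_1,x_2}$ in terms of two independent pairs of Brownian motions $(X^1,X^2)$ and $(X^3,X^4)$, both started at $(x_1,x_2)$, yields
\begin{equation*}
\begin{aligned}
\mathcal{X}_{t,x_1,x_2}^2 = \E_{x_1,x_2,x_1,x_2}\Big[ & e^{\sum_{j=1}^4 \int_0^t \eta(t-s,X^j_s)\,ds - 2tR(0) + \sum_{j=1}^4 h_0(X^j_t)}\\
&\times \prod_{j=1}^2 \int_{\sigma_j}^{\tau_j} R(X^{2j-1}_s-X^{2j}_s)\,d(X^{2j-1}_s-X^{2j}_s)\Big].
\end{aligned}
\end{equation*}
Here the two stochastic integrals and the boundary term are functionals of the Brownian paths alone, so that the entire dependence on the noise sits in the single linear functional $G=\sum_{j=1}^4\int_0^t \eta(t-s,X^j_s)\,ds$. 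Note that the deterministic Wick corrections $-\tfrac{t}{2}R(0)$, one per Brownian motion, have combined into the constant $-2tR(0)$.

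Next I would bring $\EE$ (the expectation over $\eta$) inside the Brownian expectation. Since $\eta$ is independent of the Brownian motions and the stochastic integrals have finite moments (the integrand is bounded because $R$ has compact support), Fubini applies, and for frozen paths $X^1,\dots,X^4$ the quantity $G$ is centered Gaussian. Using $\EE e^{G}=e^{\frac12\mathrm{Var}(G)}$ together with the covariance $\EE\,\eta(t-s,x)\eta(t-s',y)=\delta(s-s')R(x-y)$ gives
\begin{equation*}
\tfrac12\mathrm{Var}(G)=\tfrac12\sum_{i,j=1}^4\int_0^t R(X^i_s-X^j_s)\,ds = 2R(0)t + \int_0^t \VV(\mathbf{X}_s)\,ds,
\end{equation*}
where I have separated the four diagonal terms $i=j$, each contributing $R(0)t$, from the off-diagonal terms, which assemble into $\int_0^t \VV(\mathbf{X}_s)\,ds$ with $\VV$ as in \eqref{VVee}.

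The decisive step is the observation that the diagonal contribution $2R(0)t$ produced by the Gaussian expectation cancels exactly against the deterministic correction $-2tR(0)$ carried by the square of the Feynman--Kac factors. After this cancellation the noise has been fully integrated out, leaving only the interaction potential $\VV$ and the boundary term $\LL(\mathbf{X}_t)=\sum_{i=1}^4 h_0(X^i_t)$, which is precisely the claimed identity \eqref{e.2ndmmX}.

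I expect the only genuinely delicate point to be the bookkeeping of the renormalization constant: one must count the four diagonal terms of $\mathrm{Var}(G)$ correctly and confirm that the resulting $e^{2R(0)t}$ annihilates the $e^{-2tR(0)}$ coming from the Wick corrections, so that the eventual limit $\eps\to0$ (in which $R(0)=R_\eps(0)\to\infty$) is not spoiled by a diverging prefactor. The interchange of $\EE$ with the Brownian expectation and the application of $\EE e^{G}=e^{\frac12\mathrm{Var}(G)}$ are routine given the compact support of $R$, but I would record the integrability of the two stochastic integrals in order to justify Fubini.
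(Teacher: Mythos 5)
Your proposal is correct: expanding the square over four independent Brownian paths, freezing them, and using $\EE e^{G}=e^{\frac12\mathrm{Var}(G)}$ so that the diagonal terms $2R(0)t$ of the variance cancel the Wick corrections is exactly the standard moment computation this lemma rests on, and your bookkeeping (four diagonal terms, off-diagonal terms assembling into $\int_0^t \VV(\mathbf{X}_s)\,ds$, evenness of $R=\phi\ast\phi$) is right. The paper in fact states Lemma \ref{l.2ndmmre} without proof, treating it as routine, so your argument supplies precisely the computation the authors leave implicit; your added remarks on Fubini and the finiteness of moments of the stochastic integrals (bounded integrand, compact support of $R$, growth condition on $h_0$) are the appropriate justifications.
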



It is convenient to view 
 $e^{ \int_0^t \VV({\bf X}_{s})ds+\LL({\bf X}_t)}
 $ as the weight of a Gibbs measure on $C([0,t],\R^4)$.  We write expectations with respect to this measure as $\Egibbs_{\x}$. 
\blue{The proof of Lemma~\ref{l.2ndmmre1} below shows}  the path $\bfX_s$ under the Gibbs measure is  a diffusion 
\begin{equation}\label{e.sdebfX}
\begin{aligned}
 &d\mathbf{X}_s=\bfU_{t-s}(\mathbf{X}_s)ds+d\mathbf{B}_s, \quad\quad \bfX_0=\x,\\
 &\mbox{ with } \bfU_t(\x)= (U_t^1(\x),\ldots,U_t^4(\x))=\nabla \log \cZ(t,\x), \mbox{ and } \mathbf{B}_s=(B_s^1,\ldots,B_s^4).
 \end{aligned}
 \end{equation}
 Here $\cZ(t,\x)= \E_{\x} e^{ \int_0^t \VV({\bf X}_{s})ds+\LL({\bf X}_{t})}$ solves 
 $
\partial_t \cZ=\tfrac12\Delta \cZ+\VV(\x) \cZ$ with  $\cZ(0,\x)=e^{\LL(\x)}$.
We can write  
\[
 \EE \mathcal{X}^2_{t,x_1,x_2}=\tfrac{ \EE \mathcal{X}^2_{t,x_1,x_2}}{\cZ(t,x_1,x_2,x_1,x_2)}\cZ(t,x_1,x_2,x_1,x_2)
 \]
 and we will show $\cZ$ is uniformly bounded by $e^{C(|x_1|+|x_2|)}$ independent of $R(\cdot)$ (see \eqref{e.417} below), then it suffices to show that the first factor on the r.h.s. goes to zero.

 \begin{lemma}\label{l.2ndmmre1}
 \begin{align}
\nonumber\tfrac{\EE \mathcal{X}_{t,x_1,x_2}^2}{\cZ(t,x_1,x_2,x_1,x_2)}=\Egibbs_{x_1,x_2,x_1,x_2} &\int_{\sigma_{1}}^{\tau_{1}} R(X_s^1-X_s^2)\blue{\left([U^1_{t-s}(\bfX_s)-U^2_{t-s}(\bfX_s)]ds+dB_s^1-dB_s^2\right)}\\
\label{e.11131} \times& \int_{\sigma_{2}}^{\tau_{2}} R(X_s^3-X_s^4)\blue{\left([U^3_{t-s}(\bfX_s)-U^4_{t-s}(\bfX_s)]ds+dB^3_s-dB^4_s\right).}
 \end{align}
 \end{lemma}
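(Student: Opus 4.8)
The plan is to read the identity as a change of measure followed by a Girsanov decomposition, after which everything reduces to showing that the terms carrying a stochastic–integral (martingale) factor drop out of the expectation.

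First I would normalize. Writing $\cZ=\cZ(t,x_1,x_2,x_1,x_2)$ and setting $I_j:=\int_{\sigma_j}^{\tau_j}R(X^{2j-1}_s-X^{2j}_s)\,d(X^{2j-1}_s-X^{2j}_s)$, Lemma~\ref{l.2ndmmre} reads $\EE\mathcal{X}^2_{t,x_1,x_2}=\E_{x_1,x_2,x_1,x_2}\,e^{\int_0^t\VV(\bfX_s)ds+\LL(\bfX_t)}I_1I_2$. Dividing by $\cZ$ and absorbing the weight $e^{\int\VV+\LL}$ into the measure turns the reference expectation into the Gibbs expectation, so $\EE\mathcal{X}^2/\cZ=\Egibbs_{x_1,x_2,x_1,x_2}[I_1I_2]$. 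It therefore suffices to prove $\Egibbs[I_1I_2]=\Egibbs[D_1D_2]$, with $D_j$ the factor appearing on the right of \eqref{e.11131}.

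Next I would apply Girsanov. Under the Gibbs measure $\bfX$ solves \eqref{e.sdebfX}, so $d(X^{2j-1}_s-X^{2j}_s)=\big(U^{2j-1}_{t-s}(\bfX_s)-U^{2j}_{t-s}(\bfX_s)\big)ds+d(B^{2j-1}_s-B^{2j}_s)$, where $B^i$ are the components of the driving Brownian motion $\BB$. Each Itô integral then splits as $I_j=D_j+M_j$, with $D_j$ exactly the drift factor in \eqref{e.11131} and $M_j:=\int_{\sigma_j}^{\tau_j}R(X^{2j-1}_s-X^{2j}_s)\,d(B^{2j-1}_s-B^{2j}_s)$ a genuine martingale (the integrand is bounded, since $R$ is, and the horizon is finite). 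Expanding $I_1I_2=D_1D_2+D_1M_2+M_1D_2+M_1M_2$, the claim is equivalent to the vanishing of the Gibbs expectation of the last three terms.

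The term $M_1M_2$ is harmless: $M_1,M_2$ are driven by $B^1-B^2$ and $B^3-B^4$, and since the four components of $\BB$ are independent the bracket $\langle M_1,M_2\rangle$ vanishes identically, so $\Egibbs M_1M_2=\Egibbs\langle M_1,M_2\rangle=0$. Moreover the replica exchange $(X^1,X^2)\leftrightarrow(X^3,X^4)$ fixes both the initial datum $(x_1,x_2,x_1,x_2)$ and the symmetric functions $\VV,\LL$ of \eqref{VVee}, hence preserves the Gibbs measure and gives $\Egibbs D_1M_2=\Egibbs M_1D_2$; everything thus comes down to $\Egibbs[D_1M_2]=0$. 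Using Itô's product rule together with $\langle I_1,M_2\rangle=\langle M_1,M_2\rangle=0$ and the martingale property, this reduces to
\[
\Egibbs\int_{\sigma_1}^{\tau_1}M_2(s)\,R(X^1_s-X^2_s)\big(U^1_{t-s}(\bfX_s)-U^2_{t-s}(\bfX_s)\big)\,ds=0,
\]
where $M_2(s)$ denotes the running martingale stopped at $s\wedge\tau_2$.

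This last identity is the crux and the step I expect to be the main obstacle. The natural idea is that the integrand $R(X^1-X^2)(U^1-U^2)$ is \emph{odd} under the transposition $X^1\leftrightarrow X^2$ — here one uses that $\log\cZ$ is symmetric in its four arguments, so $U^1-U^2$ changes sign and in particular vanishes on $\{X^1=X^2\}$ — whereas $M_2$ is \emph{even} under that transposition. The difficulty is that, unlike in the discrete and one–replica arguments surveyed in the introduction, the Gibbs measure started from $(x_1,x_2,x_1,x_2)$ is \emph{not} invariant under the single transposition $X^1\leftrightarrow X^2$ when $x_1\neq x_2$, so the cancellation cannot be read off from a global symmetry and genuinely uses the precise form of the drift $\bfU=\nabla\log\cZ$. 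The route I would pursue is to combine the strong Markov property and the martingale property of $M_2$ with the exchangeability of $X^1$ and $X^2$ past their first meeting time $\tau_1$ (which holds because the drift $U^1-U^2$ vanishes there); making this interaction between the within–pair antisymmetry of one replica and the martingale coming from the other replica exactly cancel is the delicate point, and is where I expect the real work of the proof to lie.
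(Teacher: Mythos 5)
Everything you actually prove is correct, and your first two steps are precisely the paper's entire proof: the paper shows, by applying It\^o's formula to $\H(t-s,\bfX_s)=\log\cZ(t-s,\bfX_s)$, that the Gibbs density equals the Girsanov density \eqref{e.rnde} of the diffusion \eqref{e.sdebfX} (you invoke \eqref{e.sdebfX} directly, an inessential difference), and then concludes with the single sentence that ``the representation follows from Girsanov theorem.'' In other words, the paper substitutes $d(X^{2j-1}_s-X^{2j}_s)=[U^{2j-1}_{t-s}(\bfX_s)-U^{2j}_{t-s}(\bfX_s)]\,ds+d(B^{2j-1}_s-B^{2j}_s)$ and discards every term carrying a Brownian differential without further comment. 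Your bookkeeping of what that discarding requires --- $\Egibbs[M_1M_2]=0$, which you correctly obtain from orthogonality of the drivers, plus $\Egibbs[D_1M_2]=\Egibbs[M_1D_2]=0$, which you correctly reduce via the replica exchange $(X^1,X^2)\leftrightarrow(X^3,X^4)$ and the martingale property to the vanishing of $\Egibbs\int_{\sigma_1}^{\tau_1}M_2(s)R(X^1_s-X^2_s)[U^1_{t-s}(\bfX_s)-U^2_{t-s}(\bfX_s)]\,ds$ --- is more explicit than anything in the paper.

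So your proposal does have a genuine gap: the drift--martingale cross terms are never shown to vanish, and without that the lemma is not proved. But you should know that this gap is not one the paper fills; its proof stops exactly where yours does, with no analogue of the cancellation you are asking for. Your reasons for finding this step delicate are also sound: the orthogonality that kills $M_1M_2$ says nothing about $D_1M_2$; the two pairs are genuinely coupled under the Gibbs measure, since $\bfU=\nabla\log\cZ$ mixes all four coordinates, so $D_1$ and $M_2$ are correlated; and when $x_1\neq x_2$ the only path permutations preserving the starting point $(x_1,x_2,x_1,x_2)$ are generated by $1\leftrightarrow 3$ and $2\leftrightarrow 4$, none of which flips the sign of one factor while fixing the other. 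Any honest proof of the exact identity must therefore use the specific structure of $\bfU$, not ``Girsanov'' alone. If one only wants Prop.~\ref{p22} rather than the exact identity, the cross terms can instead be estimated: by Cauchy--Schwarz, $|\Egibbs[D_1M_2]|\le(\Egibbs D_1^2)^{1/2}(\Egibbs M_2^2)^{1/2}$, and $\Egibbs M_2^2=2\,\Egibbs\int_{\sigma_2}^{\tau_2}R(X^3_s-X^4_s)^2\,ds$ remains of order one (it does not vanish) as $R=R_\eps$ concentrates, so this route yields a bound of order $\eps^{1/2}$ up to logarithms in place of \eqref{e:315} --- still enough for the invariance theorem, but strictly weaker than what the lemma asserts.
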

 
 \begin{proof}
 This is a general fact for Gibbs measure coming from the Feynman-Kac representation of the solution to the heat equation with a potential. We present the proof for  convenience of the reader:   
 $\H(t,\x)=\log \cZ(t,\x)$ solves
\[
\partial_t \H=\tfrac12\Delta \H+\tfrac12|\nabla \H|^2+\VV, \quad \H(0,\x)=\LL(\x).
\]
Fix $t>0$. Applying It\^o's formula to $\H(t-s,\mathbf{X}_s)$ with $\mathbf{X}_0=\x$,
\[
\begin{aligned}
\LL(\mathbf{X}_t)=\H(t,\x)-\int_0^t \VV(\mathbf{X}_s)ds+\int_0^t \bfU_{t-s}(\mathbf{X}_s)\cdot d\mathbf{X}_s-\tfrac12\int_0^t |\bfU_{t-s}(\mathbf{X}_s)|^2 ds.
\end{aligned}
\]
Thus, the Gibbs ``density'' can be written as 
\begin{equation}\label{e.rnde}
\tfrac{e^{\int_0^t \VV(\mathbf{X}_s)ds+\LL(\mathbf{X}_t)}}{\cZ(t,\x)}=e^{\int_0^t \bfU(t-s,\mathbf{X}_s)\cdot d\mathbf{X}_s-\tfrac12\int_0^t |\bfU(t-s,\mathbf{X}_s)|^2 ds}
\end{equation}
and the representation follows from Girsanov theorem.
 \end{proof}

 \begin{lemma}\label{l.mmlocaltime}  
 Suppose that $h_0\in \mathscr{B}_{\alpha,\beta}$ \blue{(defined in \eqref{e.Bab})}.  Then there is a $C<\infty$ depending on $T,\alpha,\beta$  such that for all $t\in[0,T]$, $\x\in \R^4$, \begin{equation}\label{e.417}
    \cZ(t,\x)\le C e^{\beta \sum_{i=1}^4|x_i|} 
 \end{equation}  and 
 \begin{equation}
    |\bfU(t,\x)| \leq  Ce^{C\beta \sum_{i=1}^4|x_i|}\big(1+|\log |{\rm supp}\,R||\big).
 \end{equation}
 \end{lemma}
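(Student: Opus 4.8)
The plan is to prove the two bounds separately, establishing \eqref{e.417} first since the lower bound on $\cZ$ it yields is needed for the gradient estimate. For the upper bound I would split the terminal data from the potential by Cauchy--Schwarz,
\begin{equation*}
\cZ(t,\x)=\E_{\x}\big[e^{\int_0^t\VV(\bfX_s)ds}e^{\LL(\bfX_t)}\big]\le\big(\E_{\x}e^{2\int_0^t\VV(\bfX_s)ds}\big)^{1/2}\big(\E_{\x}e^{2\LL(\bfX_t)}\big)^{1/2}.
\end{equation*}
The terminal factor is elementary: using $|h_0(x)|\le\alpha+\beta|x|$ and the Gaussian tails of $X^i_t-x_i$ gives $\E_{\x}e^{2\LL(\bfX_t)}\le Ce^{2\beta\sum_i|x_i|}$, whose square root is the asserted weight. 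The potential factor must be bounded \emph{uniformly in $R$}; the key point is that although $\|R_\eps\|_{L^\infty}\sim\eps^{-1}$ blows up, the Kato-type quantity $\sup_{\z}\E_{\z}\int_0^t|\VV|(\bfX_s)\,ds$ stays bounded, because each pair difference $X^i_s-X^j_s$ is a Brownian motion, $\int|R_\eps|=\|R\|_{L^1}$ is scale-invariant, and the heat kernel contributes only $\int_0^t(4\pi s)^{-1/2}ds=\sqrt{t/\pi}$. Summing over the six pairs and invoking Khasminskii's lemma (subdividing $[0,t]$ and using the Markov property) bounds the exponential moment by a constant independent of $\eps$. Running the same two estimates through Jensen's inequality gives the matching lower bound $\cZ(t,\x)\ge c\,e^{-\beta\sum_i|x_i|}$, uniform in $t\in[0,T]$, which I record for use below.

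For the gradient I would first derive a probabilistic representation of $\bfU=\nabla\log\cZ$ that does \emph{not} differentiate the terminal data $\LL$. Differentiating $\cZ(t,\x)=\E[\exp(\int_0^t\VV(\x+\BB_s)ds+\LL(\x+\BB_t))]$ naively produces the term $h_0'(X^k_t)$, which is uncontrolled since $\mathscr{B}_{\alpha,\beta}$ only bounds $h_0$ itself. The remedy is a Bismut--Elworthy--Li / Malliavin integration by parts in the $k$-th Brownian coordinate: writing $h_0'(x_k+B^k_t)=\tfrac1t\int_0^t D^k_r[h_0(x_k+B^k_t)]\,dr$ and dualizing the Malliavin derivative transfers the derivative onto a stochastic weight. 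Carrying this out (first for smooth $h_0$, then approximating a general $h_0\in\mathscr{B}_{\alpha,\beta}$) yields
\begin{equation*}
U^k_t(\x)=\Egibbs_{\x}\Big[\frac{X^k_t-x_k}{t}+\int_0^t\big(1-\tfrac{s}{t}\big)\,(\partial_{x_k}\VV)(\bfX_s)\,ds\Big],\qquad (\partial_{x_k}\VV)(\z)=\sum_{j\ne k}R'(z_k-z_j).
\end{equation*}
This formula involves $h_0$ only through the Gibbs weight, so it is valid for merely continuous data.

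It then remains to estimate the two terms. The drift/endpoint term is harmless: it carries no $\eps$-dependence, being controlled by the two-sided bounds on $\cZ$ together with the Gaussian moments of $X^k_t-x_k$, so it does not contribute to the logarithm. The potential-gradient term is the crux and is where $|\log|{\rm supp}\,R||$ enters. Since $R'_\eps$ has $L^1$ norm of order $\eps^{-1}$, any bound that discards the sign of the odd function $R'$ diverges, so the cancellation must be preserved. I would write $\Egibbs_{\x}[(\partial_{x_k}\VV)(\bfX_s)]=\cZ(t,\x)^{-1}\int q_s(\x,\z)\,(\partial_{z_k}\VV)(\z)\,\cZ(t-s,\z)\,d\z$, where $q_s$ is the Feynman--Kac kernel, and integrate by parts in $z_k$ using $\partial_{z_k}\VV=\partial_{z_k}\sum_{j\ne k}R(z_k-z_j)$. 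This produces two pieces: one in which the derivative lands on $\cZ(t-s,\cdot)$, reproducing a factor $U^k_{t-s}$ that feeds a self-consistent Gronwall argument; and one in which it lands on the kernel $q_s$, where the kernel-gradient bound (of order $s^{-1}$ near the diagonal), tested against the concentrated $R_\eps$ and integrated in $s$ with the split at $s$ of order the squared width of $R_\eps$, produces exactly the logarithm. Closing the Gronwall loop while carrying the exponential weights $e^{C\beta\sum_i|x_i|}$ gives the claimed bound.

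The main obstacle, then, is the potential-gradient term: one cannot afford the $L^1$ size of $R'_\eps$, so the estimate hinges on an integration by parts that relocates the derivative onto the smooth heat kernel and the partition function, leaving only a logarithmically divergent residue, and on closing the resulting self-consistent inequality for $|\bfU|$ by Gronwall, uniformly in $\eps$ and with the correct spatial growth.
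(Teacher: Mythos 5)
Your treatment of the bounds on $\cZ$ matches the paper's exactly: Cauchy--Schwarz to split the potential from the terminal data, a Khasminskii-type bound making $\E_{\x} e^{2\int_0^t\VV(\bfX_s)ds}$ uniform in $R$, Gaussian integrals for the $e^{2\LL}$ factor, and the elementary lower bound $\cZ(t,\x)\ge c\,e^{-\beta\sum_i|x_i|}$ from $R\ge 0$. The divergence --- and the genuine gap --- is in the gradient bound. Your chain (Bismut--Elworthy--Li to keep derivatives off $h_0$, then the occupation-density formula and an integration by parts in $z_k$ to avoid paying the $L^1$ norm of $R'_\eps$) ultimately rests on a ``kernel-gradient bound'' for the Feynman--Kac kernel $q_s$ of $e^{s(\frac12\Delta+\VV)}$, of Gaussian type with an $s^{-1/2}$ factor, \emph{uniformly in the smoothing parameter}. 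You assert this but do not prove it, and it is not an off-the-shelf fact: $\|\VV_\eps\|_{L^\infty}\sim\eps^{-1}$, so standard parabolic regularity gives only $\eps$-dependent constants, and a uniform gradient estimate for $q_s$ is an estimate of exactly the same nature and difficulty as the bound on $\bfU=\nabla\log\cZ$ that the lemma asserts --- any natural proof of it (Duhamel for $\nabla q_s$) is a self-referential Volterra argument. In other words, the crux of the lemma has been relocated into an unproven ingredient, with a real risk of circularity. The Gronwall closure adds further unchecked steps (a priori finiteness of the weighted sup of $|\bfU|$, and uniform-in-$\eps$ control of $\int_0^t\Egibbs_{\x}\big[\VV(\bfX_s)e^{C\beta\sum_i|X_s^i|}\big]ds$); these are plausible but not free.

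The paper avoids all of this machinery with one move: the mild (Duhamel) formulation for $\cZ$ itself,
\begin{equation*}
\partial_{x_1}\cZ(t,\x)=\partial_{x_1}\pp_t\ast e^{\LL}(\x)+\int_0^t ds\int_{\R^4}d\y\,\partial_{x_1}\pp_{t-s}(\x-\y)\,\VV(\y)\,\cZ(s,\y),
\end{equation*}
in which the spatial derivative falls only on the \emph{free} heat kernel --- never on $h_0$, never on $R$, never on any Feynman--Kac object. Both obstacles that motivated your detour thus disappear simultaneously. The already-established bound \eqref{e.417} controls $\VV(\y)\cZ(s,\y)$, the elementary estimates $|\partial_xp_s|\le Cs^{-1/2}p_{2s}$ and $R\le Cp_{C\lambda}$ with $\lambda=|{\rm supp}\,R|$ reduce the potential term to $\int_0^t s^{-1/2}(3s+C\lambda)^{-1/2}ds\le C|\log\lambda|$ --- the same log-producing computation you identified, but applied to the free kernel where it costs nothing --- and dividing by the lower bound on $\cZ$ finishes. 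Your scheme could probably be completed, but only after proving the uniform kernel-gradient estimate, which would essentially reproduce this Duhamel argument anyway; I recommend recasting the proof in that form.
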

 
 \begin{proof}  First we bound $\cZ(t,\x)$ using the Feynman-Kac representation $\E_{\x} e^{ \int_0^t \VV({\bf X}_{s})ds+\LL({\bf X}_{t})}$.  By Cauchy-Schwarz inequality,
\begin{equation}\label{e.416}
   \cZ(t,\x)\le (\E_\x e^{2\int_0^t \VV({\bf X}_{s})ds})^{1/2}
   (\E_\x e^{2\LL({\bf X}_{t})})^{1/2} \le C{e^{Ct}}\prod_{i=1}^4(\int_\R dy p_t(x_i- y) e^{2h_0(y)})^{1/2},
 \end{equation}
 so that if $h_0\in \mathscr{B}_{\alpha,\beta}$, \eqref{e.417} holds. 
 
To study $\bfU=\nabla \log \cZ$, by symmetry it suffices to consider $\partial_{x_1}$.  By the mild formulation, 
\begin{equation}\label{e.mild}
 \partial_{x_1} \cZ(t,\x)= \partial_{x_1} \pp_t\ast e^{\LL} (\x)+\int_0^tds\int_{\R^4} d\y\partial_{x_1} \pp_{t-s}(\x-\y)\VV(\y)\cZ(s,\y),
 \end{equation}
 where $\pp_t$ is the standard heat kernel on $\R_+\times \R^4$ with $\pp_t(\x) = \prod_{i=1}^4p_t(x_i)$.   The first term on the right hand side, \begin{equation}
 \partial_{x_1} \pp_t\ast e^{\LL} (\x) = \int_\R dy_1~ \partial_xp_t(x_1- y_1) {e^{h_0(y_1)}}\prod_{i=2}^4\int_\R dy_i ~p_t(x_i- y_i) {e^{h_0(y_i)}},
 \end{equation}
 from which it is not hard to see that if $h_0\in \mathscr{B}_{\alpha,\beta}$ there is a $C<\infty$ depending on $t,\alpha$ so that 
\begin{equation}
|\partial_{x_1} \pp_t\ast e^{\LL} (\x) |\le Ce^{\beta\sum_{i=1}^4 
|x_i|}.
\end{equation}
The second term on the right hand side  of \eqref{e.mild} can be bounded by \eqref{e.417} as
 \begin{equation}\label{e.4.21}
\begin{aligned}
&C\sum_{i<j}\int_0^tds\int_{\R^4} d\y ~ R(y_i-y_j)|\partial_xp_{s}(x_1-y_1)| \prod_{\ell=2}^4 p_{s} (x_\ell-y_\ell)e^{\beta\sum_{i=1}^4|y_i|},
 \end{aligned}
 \end{equation}
 where we have also changed $t-s\mapsto s$.
 Since $|x|\le e^{x^2/4}$ for $x\in \R$, we have, for any $s>0$, $x\in\R$,
 \begin{equation}
 |\partial_x p_s (x)|\le Cs^{-1/2} p_{2s}(x).
 \end{equation}
 Let $\lambda=|{\rm supp} \,R|$.  There is a constant $C>0$ such that $R(x)\le Cp_{C\lambda}(x)$.
 Since $e^{\beta|y|}\le e^{\beta y} + e^{-\beta y}$ we can bound \eqref{e.4.21} by terms for each $i<j$ which are all essentially the same as
  \begin{equation}\label{e.4.22}
\begin{aligned}
&C\int_0^tds \,s^{-1/2}\int_{\R^4} d\y ~ p_{C\lambda}(y_i-y_j)p_{2s}(x_1-y_1) \prod_{\ell=2}^4 p_{s} (x_\ell-y_\ell)\prod_{i=1}^4 (e^{\beta y_i}+e^{-\beta y_i}).
 \end{aligned}
 \end{equation}
 Consider the case $(i,j)=(1,2)$. The integral over $\y\in \R^4$ can be computed exactly with a result bounded by
 \begin{equation}
     C p_{3s+C\lambda}(x_2-x_1) e^{C\beta \sum_{i=1}^4|x_i|}.
 \end{equation}
 At this point, the best that one can do is bound $p_{3s+C\lambda}(x_2-x_1)\le C (3s+C\lambda )^{-1/2}$ and then the integration in $s$ results in the factor $C|\log \lambda|$. 

 To get the lower bound of $\cZ$, first, $\cZ(t,\x)\geq \E_\x e^{\LL({\bf X}_{t})}$ by the fact $R\geq0$. With $h_0$ satisfying  $h_0(x)\geq -\alpha-\beta|x|$,  we further derive $\cZ(t,\x)\geq C^{-1}e^{-\beta\sum_{i=1}^4 |x_i|}$, which completes the proof.
 \end{proof}

 By Schwarz's inequality, symmetry, and Lemma~\ref{l.2ndmmre1},
 \begin{align} 
&\tfrac{\EE \mathcal{X}_{t,x_1,x_2}^2}{\cZ(t,x_1,x_2,x_1,x_2)}\le  \Egibbs_{\x}  (\int_{\sigma_1}^{\tau_1} R(X_s^1-X_s^2)[U^1_{t-s}(\bfX_s)-U^2_{t-s}(\bfX_s)]ds)^2\\
&\blue{+2\Egibbs_{\x}  \int_{\sigma_1}^{\tau_1} R(X_s^1-X_s^2)[U^1_{t-s}(\bfX_s)-U^2_{t-s}(\bfX_s)]ds\int_{\sigma_2}^{\tau_2} R(X_s^3-X_s^4)d(B^3_s-B^4_s)=:I_1+I_2}
\label{e.11131}.
 \end{align}
 \blue{We apply Schwarz's inequality to bound $I_2$ by
 \[
 2\left(I_1\times 2\Egibbs_{\x}\int_{\sigma_2}^{\tau_2} R^2(X_s^3-X_s^4)ds\right)^{1/2}.
 \]}
 \blue{So it suffices to estimate $I_1$.} Multiplying by $\cZ(t,x_1,x_2,x_1,x_2)$ and then applying H\"older's inequality and Lemma~\ref{l.mmlocaltime},
\begin{equation}\label{e.326}
\begin{aligned}
 I_1\le& C(1+|\log |{\rm supp }R||)~(\E e^{ 4\int_0^t \VV({\bf X}_{s})ds})^{1/4}
  (\E e^{4 \LL({\bf X}_t)})^{1/4} \\
  &\times (
\E (\int_{\sigma_1}^{\tau_1} R(X_{s}^{1}-X_{s}^{2})e^{C\beta \sum_{i=1}^4 |X_s^i|}ds )^4 )^{1/2},
\end{aligned}
\end{equation}
where the expectations are wrt $\E=\E_{x_1,x_2,x_1,x_2}$.  By H\"older's inequality again,
\begin{equation}
  \E e^{ 4\int_0^t \VV({\bf X}_{s})ds} \le \E e^{C\int_0^t R(B_s) ds} \le {Ce^{C t}},
\end{equation}
where $B_s$ is a Brownian motion with variance $2$, which for the purposes of an upper bound we may as well assume starts at $0$.  By independence and symmetry,
\begin{equation}
( \E e^{4 \LL({\bf X}_t)})^{1/4} = (\int_{\R} dy p_t(x_1-y) e^{4h_0(y)})^{1/2}(\int_{\R} dy p_t(x_2-y) e^{4h_0(y)})^{1/2}\leq Ce^{2\beta(|x_1|+|x_2|)}.
\end{equation}
For the last factor on the right hand side of \eqref{e.326}, we bound $e^{C\beta \sum_{i=1}^4 |X_s^i|} \leq e^{C\beta \max_{s\in[0,t]}\sum_{i=1}^4 |X_s^i|}$, and for any $p\geq1$,
\[
\E e^{pC\beta \max_{s\in[0,t]}\sum_{i=1}^4 |X_s^i|} \leq C'e^{2pC\beta (|x_1|+|x_2|)}.
\]After applying Cauchy-Schwarz again we are left with the factor $(\E(\int_\sigma^\tau R(X_s^2-X_s^1)  ds)^8)^{1/4}$, \blue{and so 
\[
I_1\leq C(1+|\log |{\rm supp }R||) e^{C\beta(|x_1|+|x_2|)}(\E(\int_\sigma^\tau R(X_s^2-X_s^1)  ds)^8)^{1/4}.
\]This also implies\[
\begin{aligned}I_2\leq &C(1+|\log |{\rm supp }R||)^{1/2} e^{C\beta(|x_1|+|x_2|)}\\
&\times(\E(\int_\sigma^\tau R(X_s^2-X_s^1)  ds)^8)^{1/8}(\E\int_\sigma^\tau R^2(X_s^2-X_s^1)  ds)^{1/2}.
\end{aligned}
\]
}

\blue{Let's put together what we have:  If $h_0\in \mathscr{B}_{\alpha,\beta}$, $\EE |\mathcal{E}_{t,x_1,x_2}| \leq C e^{C\beta(|x_1|+|x_2|)}\sqrt{I_1+I_2}$, where $I_1,I_2$ are bounded as described above. It remains to replace $R$   by $R_\eps(\cdot)=\tfrac{1}{\eps}R(\tfrac{\cdot}{\eps})$ and derive  
 moment estimates of $\int_\sigma^\tau R(X^2_s-X^1_s)ds$. We claim that 
\begin{equation}\label{e:summary}
\E(\int_\sigma^\tau R_\eps(X_s^2-X_s^1)  ds)^8\leq C \eps^8, \mbox{ and }  \E \int_\sigma^\tau R_\eps^2(X_s^2-X_s^1)  ds \leq C.
\end{equation}
 }
 
To see why \eqref{e:summary} holds, first note $B_s=X_{s}^{2}-X_{s}^{1}$ is a Brownian motion with diffusion coefficient $2$, starting from $x_2-x_1$.  For some $C\in(0,\infty)$, 
$R_\eps(x) \le C\eps^{-1} \mathbf{1}_{[-C\eps,C\eps]}(x)$ and we can only increase the integral by replacing $\sigma$ by the hitting time  of $C\eps$, denoted by $\gamma_{C\eps}$.  By symmetry and the strong Markov property of Brownian motion,
\begin{equation}\label{}
\E_{x_1,x_2} (\int_\sigma^\tau R_\eps(X_{s}^{2}-X_{s}^{1})ds )^8 \le C\E_0[(\eps^{-1}\int_0^{\gamma_{C\eps}} \mathbf{1}_{[0,C\eps]} (B_s) ds)^8],
\end{equation}
where $\E_0$ indicates that $B_0=0$.
By Brownian scaling $\tilde{B}_s = \eps^{-1} B_{\eps^2 s}$ is another Brownian motion.  The latter term becomes
\begin{equation}\label{}
\E_0(\eps^{-1}\int_0^{\gamma_{C_\eps}} \mathbf{1}_{[0,C\eps]} (B_s) ds)^8= \eps^8\E_0(\int_0^{\gamma_C} \mathbf{1}_{[0,C]} (\tilde{B}_s) ds)^8,
\end{equation}
where $\gamma_C$ is the hitting time of $C$.  Let 
$T$ be the hitting time by $\tilde{B}_s$ of $\pm C$.  Each time $\tilde{B}_s$ starts at $0$ it has probability $1/2$ to hit $C$ before $-C$.  Therefore 
\begin{equation}
 \int_0^{\gamma_C} \mathbf{1}_{[0,C]} (\tilde{B}_s) ds \le T_1+\cdots+ T_N, 
\end{equation}
where $T_i$ are independent copies of $T$ and $N$ is an independent geometrically distributed random variable with parameter $1/2$.  Thus it is elementary to bound $\E_0[ (T_1+\cdots + T_N)^8]$.  We conclude that 
\blue{\begin{equation}\label{e:summary1}
    \EE |\mathcal{E}_{t,x_1,x_2}| \leq Ce^{C\beta(|x_1|+|x_2|)} ( \eps |\log \eps|^{1/2} +\eps^{1/2} |\log \eps|^{1/4}),
\end{equation}}
which is \eqref{e:315}.

\section{Gaussian integration by parts: Proofs of Prop.~\ref{p.yt120} and Prop.~\ref{Prop13}}\label{ibp}

We start with the proof of Prop.~\ref{p.yt120}. \blue{Recall that the polymer measure was introduced in Section~\ref{s.ibp}, where we denoted the polymer path by $X_{\cdot}$. It is the Brownian motion weighted by $e^{-H}$ with the Hamiltonian $H$ depending on the random environment $\eta$.} Informally we can write
 \begin{equation}
\int_0^tds~ \partial_x\eta(t-s,X_s)=\int_0^t ds \int_{\R} dy~ \phi'(X_{t-s}-y)\xi(s,y),
\end{equation}
where $\xi(s,y)$ is space-time white noise. It doesn’t quite make sense because the distribution $\xi$ is supposed to act on \emph{deterministic} test functions but $\phi'(X_{t-s}-y)$ depends on the randomness of $\xi$. Alternatively one may try to interpret it as a stochastic integral in the Stratonovitch sense, but we choose not to pursue it here. On the other hand, if $X$ is simply the Brownian motion, then we can treat $\phi'(X_{t-s}-y)$ as a deterministic test function after freezing the Brownian motion, so there is no problem with making sense of the integral. Thus,  what we mean by the expression $ \Epoly_{t,x}\int_0^t ds~\partial_x\eta(t-s, X_s) $ is really 
\[
\tfrac{\E_{x} e^{\int_0^t\eta(t-s,X_s)ds-\frac12R(0)t+h_0(X_t)} \int_0^t ds \int_\R dy \, \phi'(X_{t-s}-y)\xi(s,y)}{Z(t,x)},
\]
with the stochastic integral $\int_0^t ds\int_{\R} dy \, \phi'(X_{t-s}-y)\xi(s,y)$ well-defined for every realization of the Brownian path. With some abuse of notations, we write it as 
\[
\E_x\left[\int_0^t ds \int_{\R} dy   \tfrac{d\Ppoly_{t,x}}{d\Pb} \phi'(X_{t-s}-y)\xi(s,y)\right],
\]
with the Radon-Nikodym derivative 
\[
 \tfrac{d\Ppoly_{t,x}}{d\Pb} =\tfrac{e^{\int_0^t \eta(t-s,X_s)ds-\frac12R(0)t+h_0(X_t)}}{Z(t,x)}.
\]
With this convention and $\D_{s,y}$ representing the Malliavin derivative operator, we can integrate by parts and obtain \begin{eqnarray}
\nonumber&\mathbf{E}\left[F ~\Epoly_{t,x}\int_0^t ds~\partial_x\eta(t-s, X_s) \right]  =\E_x \mathbf{E}   \left[ \int_0^t ds\int_{\R} dy  F \tfrac{d\Ppoly_{t,x}}{d\mathbb{P}}  \phi'(X_{t-s}-y)\xi(s,y) \right] 
\\ \nonumber &\qquad 
=
 \E_x \mathbf{E}\left[ \int_0^t ds\int_{\R} dy~  \cD_{s,y} \left( F \tfrac{d\Ppoly_{t,x}}{d\mathbb{P}}  \right) \phi'(X_{t-s}-y) \right]
\\ \label{lastline}&\qquad 
=  
 \E_x \mathbf{E}\left[\int_0^t ds\int_{\R} dy~ \left( (\cD_{s,y} F )\tfrac{d\Ppoly_{t,x}}{d\mathbb{P}} + F \cD_{s,y}\tfrac{d\Ppoly_{t,x}}{d\mathbb{P}} \right) \phi'(X_{t-s}-y) \right],
\end{eqnarray}
with the last step using the product rule. The use of Fubini is justified by the fact that $F$ is bounded, and the Radon-Nikodym derivative and stochastic integral are in $L^p$ for all $1\le p<\infty$. The next lemma says that the $\cD_{s,y}\tfrac{d\Ppoly_{t,x}}{d\mathbb{P}}$ term does not contribute.

\begin{lemma} $$\int_{\R} dy~ 
 \E_x  \cD_{s,y}\tfrac{d\Ppoly_{t,x}}{d\mathbb{P}}  \phi'(X_{t-s}-y) =0.$$ 
    \end{lemma}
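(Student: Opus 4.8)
The plan is to compute $\cD_{s,y}\tfrac{d\Ppoly_{t,x}}{d\Pb}$ explicitly by differentiating both the numerator (the path weight) and the normalizing denominator $Z(t,x)$, and then to recognize that the resulting $y$-integral is the integral of a perfect $y$-derivative, hence vanishes. No special structure of the environment is needed beyond keeping track of the self-normalization.

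First I would isolate the $\xi$-dependence of the weight $M(X):=e^{\int_0^t \eta(t-s',X_{s'})ds'-\frac12R(0)t+h_0(X_t)}$. Writing the exponent, after $r=t-s'$, as $\int_0^t dr\int_\R dz\,\phi(X_{t-r}-z)\xi(r,z)$, the Malliavin derivative is $\cD_{s,y}M(X)=M(X)\,\phi(X_{t-s}-y)$. Since $Z(t,x)=\E_x M(X)$, differentiating under $\E_x$ gives $\cD_{s,y}Z(t,x)=\E_x[M(X)\phi(X_{t-s}-y)]=Z(t,x)\,\Epoly_{t,x}\phi(X_{t-s}-y)$. The quotient rule then yields the centered formula
\[
\cD_{s,y}\tfrac{d\Ppoly_{t,x}}{d\Pb}=\tfrac{d\Ppoly_{t,x}}{d\Pb}\left(\phi(X_{t-s}-y)-\Epoly_{t,x}\phi(X_{t-s}-y)\right),
\]
where the subtracted term is exactly the contribution of the denominator.

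Next I would multiply by $\phi'(X_{t-s}-y)$ and apply $\E_x$, which turns $\E_x[\,\cdot\,\tfrac{d\Ppoly_{t,x}}{d\Pb}]$ into $\Epoly_{t,x}[\,\cdot\,]$. Setting $A(y):=\Epoly_{t,x}\phi(X_{t-s}-y)$, the integrand becomes
\[
\Epoly_{t,x}[\phi(X_{t-s}-y)\phi'(X_{t-s}-y)]-A(y)\,\Epoly_{t,x}\phi'(X_{t-s}-y).
\]
Integrating in $y$, the first term vanishes by Fubini: for each frozen value $v=X_{t-s}$ one has $\int_\R \phi(v-y)\phi'(v-y)\,dy=\tfrac12\int_\R(\phi^2)'(w)\,dw=0$ since $\phi$ has compact support. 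For the second term, note $\phi'(X_{t-s}-y)=-\partial_y\phi(X_{t-s}-y)$, so $\Epoly_{t,x}\phi'(X_{t-s}-y)=-A'(y)$, whence $\int_\R A(y)\,\Epoly_{t,x}\phi'(X_{t-s}-y)\,dy=-\int_\R A A'\,dy=-\tfrac12\int_\R (A^2)'(y)\,dy=0$, using that $A=\rho^{\scriptscriptstyle\rm poly}_{t,x}(t-s,\cdot)\ast\phi\to 0$ as $|y|\to\infty$ because the polymer endpoint density is integrable and $\phi$ is compactly supported. Both terms being zero gives the claim.

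The algebra is short, so the only genuine obstacle is justifying the analytic manipulations: differentiating $Z$ under $\E_x$, and the two Fubini exchanges. These rest on the $L^p$-integrability (for all $p<\infty$) of the weight and of its Malliavin derivative already invoked in the surrounding argument, together with the boundedness and compact support of $\phi$ and $\phi'$. Conceptually the heart of the lemma is simply that the denominator produces the centering $-A(y)$ and that the two surviving $y$-integrals are integrals of the exact derivatives $\tfrac12\partial_y(\phi^2)$ and $\tfrac12\partial_y(A^2)$; this total-derivative structure, rather than any cancellation in $\xi$, is what forces the result to be $0$.
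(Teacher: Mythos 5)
Your proof is correct and follows essentially the same route as the paper's: you compute $\cD_{s,y}\tfrac{d\Ppoly_{t,x}}{d\Pb}$ by the quotient rule to get the centered formula, kill the $\phi\,\phi'$ term as the integral of the exact derivative $\tfrac12(\phi^2)'$ (the paper phrases this as $R'(0)=0$), and kill the centering term $A\,\Epoly_{t,x}\phi'(X_{t-s}-y)$ by integrating $\tfrac12(A^2)'$ with $A\to 0$ at infinity. The paper expresses this last cancellation equivalently via the two-replica polymer measure $\Epoly_{t,x,x}$, oddness of $R'$ and exchangeability of $X^1,X^2$, but the underlying identity is the same one you use.
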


\begin{proof}
    Since $\tfrac{d\Ppoly_{t,x}}{d\mathbb{P}}=\tfrac{e^{-H}}{Z}$, with the Hamiltonian $H=-\int_0^t \eta(t-s,X_s)ds+\frac12R(0)t-h_0(X_t)$ and $Z=Z(t,x)$, 
by the product rule,  \begin{equation}\label{e.product}
 \cD_{s,y}\tfrac{d\Ppoly_{t,x}}{d\mathbb{P}}=   -\left(\tfrac{\cD_{s,y}Z}{Z} +\cD_{s,y}H\right) \tfrac{e^{-H}}{Z} .
\end{equation} Now $\cD_{s,y}H=-\phi(X_{t-s}-y)$ and so 
\[
\int_{\R} dy~ (\cD_{s,y}H)\phi'(X_{t-s}-y)= -\int_{\R} dy~ \phi(X_{t-s}-y)\phi'(X_{t-s}-y)=-R'(0)=0.
\]
Furthermore,
{$
\cD_{s,y} Z =     Z \Epoly_{t,x}\phi(X_{t-s}-y) 
$}
so
\begin{align}\label{fred}
\int_{\R} dy~  \tfrac{\cD_{s,y}Z}{Z} \Epoly_{t,x} \phi'(X_{t-s}-y) & =   \Epoly_{t,x,x} \int_{\R} dy~ \phi(X^2_{t-s}-y)\phi'(X^1_{t-s}-y).
\end{align}
Note here $X^1_{t-s}$ and $X^2_{t-s}$ are just names of variables under the product polymer measure $\Epoly_{t,x,x}$.
 But  $\int_{\R} dy~ \phi(X^2-y)\phi'(X^1-y)= {R'(X^1-X^2)}$ and $R'$ is an odd function.  The expectation $\Epoly_{t,x,x}$ is symmetric in $X^1,X^2$, so this term vanishes as well. The Fubini in \eqref{fred} is justified since one can put absolute value on $\phi$ and $\phi'$ and the integrations $\int_\R \Epoly_{t,x,x} $ are still finite.
\end{proof}

We conclude that only the first term of \eqref{lastline} contributes.  Now $\cD_{s,y}F(Y) = F'(Y) \cD_{s,y}Y$ and  $Y=-\int f'(x_2)\log Z(t,x_2)dx_2$. By \eqref{fkr}, 
\begin{equation}
\D_{s,y} Y=-\int_{\R}dx_2 f'(x_2)  \tfrac{\cD_{s,y}Z(t,x_2)}{Z(t,x_2)}  =-\int_{\R}dx_2 f'(x_2) {\Epoly_{t,x_2}}\int_0^tds\,\phi(X^2_{t-s}-y). 
\end{equation}
We call the variable $x_2$ to distinguish it from the variable $x$ in \eqref{lastline}.
So
\[
\int_0^tds\int_{\R}dy ~(\cD_{s,y}Y)\phi'(X_{t-s} -y) = - \int_{\R} f'(x_2) \E^{\mathrm{poly},2}_{t,x_2}\int_0^tR'(X_{t-s}-X^2_{t-s})ds dx_2,
\]
where $\E^{\mathrm{poly},2}_{t,x_2}$ means that the expectation is only over the variable $X^2_\cdot$. Now \eqref{e.yt1} follows from \eqref{lastline} after switching $s\mapsto t-s$ and using the antisymmetry of $R'$.

We now turn to the proof of Prop.~\ref{Prop13}, which is rather similar.  Let's write 
\begin{equation}\label{wni}
u_0(X_t)=\int_{\R} dy\, \psi(X_t-y)\zeta(y) dy,
\end{equation}
where $\zeta$ is a spatial white noise.  Note that \eqref{wni} is a mild abuse of notation and we really mean the distribution $\zeta$ is acting on the test function $\psi(X_t-\cdot)$, {when $X_\cdot$ is sampled from the Wiener measure}. Since the test function is in $L^2(\R)$ for any realization of $X_t$, the pairing makes sense and there is no harm in using the formal expression \eqref{wni}.
We can then rewrite  ${\Einit}  F(Y) \Epoly_{t,x} u_0(X_t)$ using the Malliavin derivative $\tilde\cD$  with respect to $\zeta$. We can write the integrand in the first term in Prop.~\ref{Prop13} as
\[
 \Einit  F\tfrac{d\Ppoly_{t,x}}{d\mathbb{P}} \int_{\R} dy \, \psi(X_t-y)\zeta(y)  = \Einit  \int_\R dy ~\tilde\cD_y (F \tfrac{d\Ppoly_{t,x}}{d\mathbb{P}})  \psi(X_t-y).
\]
By the product rule $\tilde\cD_y (F \tfrac{d\Ppoly_{t,x}}{d\mathbb{P}}) = (\tilde\cD_y F) \tfrac{d\Ppoly_{t,x}}{d\mathbb{P}} + F \tilde\cD_y\tfrac{d\Ppoly_{t,x}}{d\mathbb{P}}  $.

\begin{lemma}
{$ \E_x\int_\R dy ~(\tilde\cD_y  \tfrac{d\Ppoly_{t,x}}{d\mathbb{P}} )\psi(X_t-y)=0$.}
\end{lemma}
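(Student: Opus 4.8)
The plan is to mirror the structure of the preceding lemma: expand $\tilde\cD_y \tfrac{d\Ppoly_{t,x}}{d\Pb}$ by the product rule and show that, although here the two resulting pieces do \emph{not} vanish separately, their single-path contributions cancel and the surviving two-path term is killed by symmetry. Since $\tfrac{d\Ppoly_{t,x}}{d\Pb}=e^{-H}/Z$ with $H=-\int_0^t\eta(t-s,X_s)\,ds+\tfrac12 R(0)t-h_0(X_t)$, and $h_0(X_t)=\int_0^{X_t}u_0(z)\,dz$ depends on $\zeta$ only through $u_0=\psi\ast\zeta$, I would first record, using $\tilde\cD_y u_0(z)=\psi(z-y)$,
\[
\tilde\cD_y H=-\int_0^{X_t}\psi(z-y)\,dz,\qquad \tilde\cD_y\tfrac{d\Ppoly_{t,x}}{d\Pb}=-\Big(\tfrac{\tilde\cD_y Z}{Z}+\tilde\cD_y H\Big)\tfrac{e^{-H}}{Z}.
\]
Absorbing the Radon--Nikodym factor into the Brownian expectation turns $\E_x[\,\cdot\,]$ into the polymer expectation $\Epoly_{t,x}$, and $\tilde\cD_y Z/Z=\Epoly_{t,x}\int_0^{X_t}\psi(z-y)\,dz$ introduces an independent second polymer path.

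Next I would carry out the $y$-integration against $\psi(X_t-y)$, using that the autocorrelation $\int_\R \psi(z-y)\psi(w-y)\,dy=(\psi\ast\psi)(w-z)=a'(w-z)$, which holds because $\psi$ is symmetric. For the $\tilde\cD_y H$ piece this gives, after one antiderivative,
\[
\int_\R dy\,\Big(\int_0^{X_t}\psi(z-y)\,dz\Big)\psi(X_t-y)=\int_0^{X_t} a'(z-X_t)\,dz=a(X_t),
\]
while the $\tilde\cD_y Z/Z$ piece, now involving two paths $X^1,X^2$ started at $x$, evaluates to $\int_0^{X^2_t}a'(z-X^1_t)\,dz=a(X^1_t)-a(X^1_t-X^2_t)$. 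Collecting the terms, the quantity in the lemma equals
\[
\Epoly_{t,x}\big[a(X_t)\big]-\Epoly_{t,x,x}\big[a(X^1_t)-a(X^1_t-X^2_t)\big].
\]

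Since $\Epoly_{t,x,x}[a(X^1_t)]=\Epoly_{t,x}[a(X_t)]$ by marginalizing out the second path, the single-path terms cancel and we are left with $\Epoly_{t,x,x}[a(X^1_t-X^2_t)]$. This is where the argument closes: $\psi$ symmetric makes $\psi\ast\psi$ even, so $a$ defined by $a'=\psi\ast\psi$, $a(0)=0$, is an \emph{odd} function; and the product polymer measure $\Epoly_{t,x,x}$ is invariant under the exchange $X^1\leftrightarrow X^2$. Hence $\Epoly_{t,x,x}[a(X^1_t-X^2_t)]=-\Epoly_{t,x,x}[a(X^1_t-X^2_t)]=0$, proving the lemma. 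The main point to watch---and where this differs from the previous lemma, in which the $H$-term vanished outright as $-R'(0)=0$---is that here the $H$-term contributes the nonzero single-path quantity $a(X_t)$, so one must keep it and let it cancel against the matching piece from $\tilde\cD_y Z/Z$ rather than hoping for termwise vanishing. The only routine check is the Fubini interchange of the $y$-integral with the $\zeta$- and path-expectations, justified as before by replacing $\psi$ with $|\psi|$ and noting that the polymer integrals remain finite.
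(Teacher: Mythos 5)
Your proof is correct and follows essentially the same route as the paper: expand via the product rule, compute both terms through $A=\psi\ast\psi$ (your $a'$), and cancel using exchangeability of the two-path polymer measure together with the symmetry of $A$. The paper organizes the algebra by symmetrizing $g(w,z)=\langle\psi\ast 1_{[0,z]},\psi(w-\cdot)\rangle$ rather than reducing to the oddness of $a$ applied to $a(X^1_t-X^2_t)$, but since $g(w,z)=a(w)-a(w-z)$ these are the same computation, and your observation that the $H$-term does not vanish termwise (unlike in the preceding lemma) is exactly the right point to flag.
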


\begin{proof}
$\tilde\cD_y\tfrac{d\Ppoly_{t,x}}{d\mathbb{P}} = -( \tfrac{\tilde\cD_yZ}{Z} +\tilde\cD_y H)\tfrac{e^{-H}}{Z}$.
Since
$h_0(X_t)=\la \psi\ast 1_{[0,X_t]},\zeta\ra$,
\[\int_\R dy \E_x (  \tfrac{\tilde\cD_yZ}{Z}+\tilde\cD_y H)\tfrac{e^{-H}}{Z} \psi(X_t-y) = {\Epoly_{t,x,x}}\la \psi\ast 1_{[0,X_t^2]},\psi(X_t^1-\cdot)\ra - \la\psi\ast1_{[0,X^1_t]},\psi(X^1_t-\cdot)\ra. \]
If we call
$
 g(w,z)=\la \psi\ast 1_{[0,z]},\psi(w-\cdot)\ra= \int_{-w}^{z-w} A(y)dy$ where $A=\psi\ast\psi$, 
 then we can write the last term as 
 \[ {\Epoly_{t,x,x} } g(X_t^1, X_t^2)   - g(X_t^1,X_t^1). \]
 Now the expectation is symmetric in $X^1$ and $X^2$ so this is the same as $\Epoly_{t,x,x}  \tfrac12(g(X_t^1, X_t^2)+ g(X_t^2,X_t^1))   - g(X_t^1,X_t^1)$.  By symmetry of $A$,
 $g(w,z) = \int_{w-z}^w A(y)dy$. It is easy to check that $g(w,z)+g(z,w)= \int_0^w A(y)dy+\int_0^z A(y)dy$.  Therefore the integrand vanishes. \end{proof} 
 
 Hence we have analogously to the proof of Prop.~\ref{p.yt120}, the only surviving term is 
 \[
  \int_{\R}dx_1 f(x_1) {\Einit}  F'(Y) \int_\R dy~\Epoly_{t,x_1}   \tilde\cD_y Y \psi(X^1_t-y).
 \]
 We call it $x_1$ because the term $\tilde\cD_y Y$ produces a second variable $x_2$.  Specifically,   $\tilde\cD_y Y = - \int_\R dx_2~ f'(x_2) \tfrac{\tilde\cD_y Z(t,x_2)}{Z(t,x_2)} $ with  $\tfrac{\tilde\cD_y Z(t,x_2)}{Z(t,x_2)}= {\Epoly_{t,x_2} }\psi\ast 1_{[0,X_t]}(y) $, 
 which gives
 \[
 -\int_{\R^2}dx_1dx_2 f(x_1)f'(x_2) {\Einit}  F'(Y) {\Epoly_{t,x_1,x_2}} [a(X^2_t-X^1_t) + \int_{-X^1_t}^0A(y)dy ].
 \]
The final term $\Epoly_{t,x_1,x_2} [\int_{-X^1_t}^0A(y)dy ]$ only depends on $x_1$.  Integrating by parts in $x_2$ kills it, giving Prop.~\ref{Prop13}.

 \end{document}